\newtheorem{theorem}{Theorem}
\newtheorem{lemma}[theorem]{Lemma}
\newtheorem{proposition}[theorem]{Proposition}
\newtheorem{remark}[theorem]{Remark}
\def\R{\mathbb{R}}
\begin{document}

\title{An epidemic model for cholera\\ 
with optimal control treatment\thanks{This work 
is part of first author's Ph.D. project, 
which is carried out at the University of Aveiro.\newline
This is a preprint of a paper whose final and definite form is with 
\emph{Journal of Computational and Applied Mathematics}, ISSN 0377-0427,
available at {\tt http://dx.doi.org/10.1016/j.cam.2016.11.002}.
Submitted 19-June-2016; Revised 14-Sept-2016; Accepted 04-Nov-2016.}}
 	
\author{Ana P. Lemos-Pai\~{a}o\\
\texttt{anapaiao@ua.pt}
\and
Cristiana J. Silva\\
\texttt{cjoaosilva@ua.pt}
\and
Delfim F. M. Torres\\
\texttt{delfim@ua.pt}}

\date{Center for Research and Development 
in Mathematics and Applications (CIDMA)\\
Department of Mathematics,
University of Aveiro, 
3810-193 Aveiro, Portugal}

\maketitle


\begin{abstract}
We propose a mathematical model for cholera with treatment through quarantine. 
The model is shown to be both epidemiologically and mathematically well posed.
In particular, we prove that all solutions of the model are positive 
and bounded; and that every solution with initial conditions in a certain 
meaningful set remains in that set for all time. The existence of unique 
disease-free and endemic equilibrium points is proved and the basic reproduction 
number is computed. Then, we study the local asymptotic stability 
of these equilibrium points. An optimal control problem is proposed and analyzed, 
whose goal is to obtain a successful treatment through quarantine. We provide 
the optimal quarantine strategy for the minimization of the number 
of infectious individuals and bacteria concentration, as well as 
the costs associated with the quarantine. Finally, a numerical simulation 
of the cholera outbreak in the Department of Artibonite (Haiti), in 2010, 
is carried out, illustrating the usefulness of the model and its analysis.\newline

\noindent {\bf Keywords:} SIQRB cholera model, basic reproduction number, 
disease-free and endemic equilibria, local asymptotic stability, 
optimal control, numerical case study of Haiti.\newline

\noindent {\bf Mathematics Subject Classification 2010:} 34C60, 49K15, 92D30.
\end{abstract}


\section{Introduction}

\indent 

Cholera is an acute diarrhoeal illness caused by infection of
the intestine with the bacterium \textit{Vibrio cholerae}, 
which lives in an aquatic organism. The ingestion of contaminated water
can cause cholera outbreaks, as John Snow proved, in 1854 \cite{Shuai}.
This is a way of transmission of the disease, but others exist. 
For example, susceptible individuals can become infected 
if they contact with infected individuals. If individuals 
are at an increased risk of infection, then they can transmit 
the disease to other persons that live with them by reflecting 
food preparation or using water storage containers \cite{Shuai}. 
An individual can be infected with or without symptoms.
Some symptoms are watery diarrhoea, vomiting and leg cramps. 
If an infected individual does not have treatment, 
then he becomes dehydrated, suffering of acidosis and circulatory
collapse. This situation can lead to death within 12 to 24 hours \cite{Mwasa,Shuai}.
Some studies and experiments suggest that a recovered individual can be immune
to the disease during a period of 3 to 10 years. Recent researches suggest
that immunity can be lost after a period of weeks to months \cite{Neilan,Shuai}.
 
Since 1979, several mathematical models for the 
transmission of cholera have been proposed: see, e.g.,
\cite{Capasso:1979,Capone,Codeco:2001,Hartley:2006,Hove-Musekwa,%
Joh:2009,Mukandavire:2008,Mwasa,Neilan,Pascual,Shuai} 
and references cited therein. In \cite{Neilan}, 
the authors propose a SIR (Susceptible--Infectious--Recovered)
type model and consider two classes of bacterial concentrations 
(hyperinfectious and less-infectious) and two classes 
of infectious individuals (asymptomatic and symptomatic). 
In \cite{Shuai}, another SIR-type model is proposed 
that incorporates, using distributed delays, 
hyperinfectivity (where infectivity varies with the time since the pathogen was shed)
and temporary immunity. The authors of \cite{Mwasa} incorporate in a SIR-type model 
public health educational campaigns, vaccination, quarantine and treatment, 
as control strategies in order to curtail the disease. 

The use of quarantine for controlling epidemic diseases 
has always been controversial, because such strategy raises political, 
ethical and socioeconomic issues and requires a careful balance between 
public interest and individual rights \cite{Tognotti:quarantine}. 
Quarantine was adopted as a mean of separating persons, animals 
and goods that may have been exposed to a contagious disease. Since 
the fourteenth century, quarantine has been the cornerstone 
of a coordinated disease-control strategy, including isolation, 
sanitary cordons, bills of health issued to ships, fumigation, 
disinfection and regulation of groups of persons who were believed 
to be responsible for spreading of the infection \cite{Matovinovic,Tognotti:quarantine}.
The World Health Organization (WHO) does not recommend quarantine measures 
and embargoes on the movement of people and goods for cholera. However, 
cholera is still on the list of quarantinable diseases of the EUA National 
Archives and Records Administration \cite{CenterDiseaseControl}.
In this paper, we propose a SIQR (Susceptible--Infectious--Quarantined--Recovered)
type model, where it is assumed that infectious individuals are subject 
to quarantine during the treatment period. 

Optimal control is a branch of mathematics developed
to find optimal ways to control a dynamic system
\cite{Cesari_1983,Fleming_Rishel_1975,Pontryagin_et_all_1962}.
There are few papers that apply optimal control to cholera models \cite{Neilan}. 
Here we propose and analyze one such optimal control problem, where the control 
function represents the fraction of infected individuals $I$ that will 
be submitted to treatment in quarantine until complete recovery. The objective 
is to find the optimal treatment strategy through quarantine that minimizes 
the number of infected individuals and the bacterial concentration, 
as well as the cost of interventions associated with quarantine. 

Between 2007 and 2011, several cholera outbreaks occurred,
namely in Angola, Haiti and Zimbabwe \cite{Shuai}. In Haiti, the first 
cases of cholera happened in Artibonite Department on 14th 
October 2010. The disease propagated along the Artibonite river 
and reached several departments. Only within one month, 
all departments had reported cases in rural areas and places without 
good conditions of public health \cite{WHO}. In this paper, we provide 
numerical simulations for the cholera outbreak in the Department 
of Artibonite, from $1$st November 2010 until $1$st May 2011 \cite{WHO}. 
Our work is of great significance, because it provides an approach to cholera
with big positive impact on the number of infected individuals and on the bacterial
concentration. This is well illustrated with the real data of the cholera outbreak 
in Haiti that occurred in 2010. More precisely,
we show that the number of infectious individuals decreases
significantly and that the bacterial concentration 
is a strictly decreasing function, when our control strategy is applied. 

The paper is organized as follows. In Section~\ref{Sec:model}, 
we formulate our model for cholera transmission dynamics.
We analyze the positivity and boundedness of the solutions, 
as well as the existence and local stability of the disease-free 
and endemic equilibria, and we compute the basic reproduction number 
in Section~\ref{Sec:mod:analysis}. In Section~\ref{sec:ocp}, 
we propose and analyze an optimal control problem. 
Section~\ref{sec:num:simu} is devoted to numerical simulations. 
We end with Section~\ref{sec:conc}, by deriving some conclusions 
about the inclusion of quarantine in treatment. 


\section{Model formulation}
\label{Sec:model}

\indent 

We propose a SIQR (Susceptible--Infectious--Quarantined--Recovered) 
type model and consider a class of bacterial concentration for the 
dynamics of cholera. The total population $N(t)$ is divided into 
four classes: susceptible $S(t)$, infectious with symptoms $I(t)$, 
in treatment through quarantine $Q(t)$ and recovered $R(t)$ at time $t$, 
for $t \ge 0$. Furthermore, we consider a class $B(t)$ that reflects 
the bacterial concentration at time $t$. We assume that there is a positive 
recruitment rate $\Lambda$ into the susceptible class $S(t)$ and a positive 
natural death rate $\mu$, for all time $t$ under study. Susceptible
individuals can become infected with cholera at rate $\frac{\beta B(t)}{\kappa+B(t)}$
that is dependent on time $t$. Note that $\beta>0$ is the ingestion 
rate of the bacteria through contaminated sources, $\kappa$ is the half 
saturation constant of the bacteria population and $\frac{B(t)}{\kappa+B(t)}$ 
is the possibility of an infected individual to have the disease with symptoms, 
given a contact with contaminated sources \cite{Mwasa}. Any recovered individual 
can lose the immunity at rate $\omega$ and therefore becomes susceptible again. 
The infected individuals can accept to be in quarantine during a period of time. 
During this time they are isolated and subject to a proper medication, 
at rate $\delta$. The quarantined individuals can recover at rate
$\varepsilon$. The disease-related death rates associated with the individuals
that are infected and in quarantine are $\alpha_1$ and $\alpha_2$, respectively.
Each infected individual contributes to the increase of the bacterial concentration
at rate $\eta$. On the other hand, the bacterial concentration can decrease
at mortality rate $d$. These assumptions are translated in the following
mathematical model:
\begin{align}
\label{ModeloColera}
\begin{cases}
S'(t)=\Lambda-\displaystyle\frac{\beta B(t)}{\kappa+B(t)}S(t)+\omega R(t)-\mu S(t),\\
I'(t)=\displaystyle\frac{\beta B(t)}{\kappa+B(t)}S(t)-(\delta+\alpha_1+\mu)I(t),\\
Q'(t)=\delta I(t)-(\varepsilon+\alpha_2+\mu)Q(t),\\
R'(t)=\varepsilon Q(t)-(\omega+\mu)R(t),\\
B'(t)=\eta I(t)-dB(t).
\end{cases}
\end{align}


\section{Model analysis}
\label{Sec:mod:analysis}

\indent 

Throughout the paper, we assume that the initial conditions 
of system \eqref{ModeloColera} are nonnegative:
\begin{equation}
\label{eq:init:cond}
S(0) = S_0 \geq 0 \, , \quad I(0) = I_0 \geq 0 \, , 
\quad Q(0)= Q_0 \geq 0 \, , \quad R(0) = R_0 \geq 0 \, , 
\quad B(0) = B_0 \geq 0 \, .
\end{equation}


\subsection{Positivity and boundedness of solutions}

\indent 

Our first lemma shows that the considered model 
\eqref{ModeloColera}--\eqref{eq:init:cond} 
is biologically meaningful.

\begin{lemma}
The solutions $(S(t), I(t), Q(t), R(t), B(t))$ of system \eqref{ModeloColera} 
are nonnegative for all $t \geq 0$ with nonnegative initial conditions 
\eqref{eq:init:cond} in $(\R_0^+)^5$. 
\end{lemma}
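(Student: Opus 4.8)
The plan is to show that each coordinate axis (and more precisely each hyperplane where one variable vanishes) acts as a barrier that the flow cannot cross toward negative values. Concretely, I would argue by examining the vector field on the boundary of the nonnegative orthant $(\R_0^+)^5$ and checking that, whenever a component is zero, its derivative is nonnegative, so that the component cannot decrease through $0$. This is the standard invariance argument for population models, and it works here because system \eqref{ModeloColera} has the quasi-positivity (essentially nonnegativity of the off-diagonal structure) property: every negative term in each equation carries the variable of that same equation as a factor.

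First I would treat $S$. On the set $\{S=0\}$ the first equation gives $S'(0)=\Lambda+\omega R(t)\ge 0$ whenever $R\ge 0$, since $\Lambda>0$ and $\omega>0$; hence $S$ cannot become negative. Next, on $\{I=0\}$ the second equation gives $I'=\frac{\beta B}{\kappa+B}S\ge 0$ whenever $S\ge 0$ and $B\ge 0$, because $\beta,\kappa>0$; so $I$ stays nonnegative. Similarly, on $\{Q=0\}$ we get $Q'=\delta I\ge 0$ when $I\ge 0$; on $\{R=0\}$ we get $R'=\varepsilon Q\ge 0$ when $Q\ge 0$; and on $\{B=0\}$ we get $B'=\eta I\ge 0$ when $I\ge 0$. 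Thus on every face of the boundary the corresponding outward-pointing component of the field is nonnegative.

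To turn these sign observations into a rigorous proof of forward invariance, I would either invoke a standard comparison/invariance theorem (e.g., the result that a system $x'=f(x)$ with $f$ locally Lipschitz and $f_i(x)\ge 0$ whenever $x_i=0$, $x\ge 0$, leaves $(\R_0^+)^n$ positively invariant), or give a direct argument: for the linear-type equations one can write explicit integrating-factor representations, e.g.
\begin{equation*}
B(t)=B_0 e^{-dt}+\int_0^t e^{-d(t-s)}\eta I(s)\,ds,
\end{equation*}
and analogous Duhamel formulas for $Q$, $R$, and (with the time-dependent coefficient $\frac{\beta B}{\kappa+B}+\mu$) for $S$ and $I$; each such formula shows the variable is a sum of nonnegative terms as long as the variables appearing in the integrands are nonnegative. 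Feeding these into a bootstrap/continuity argument — define $t^\ast=\sup\{t\ge 0: S,I,Q,R,B\ge 0 \text{ on }[0,t]\}$ and derive a contradiction with $t^\ast<\infty$ using the boundary sign conditions — completes the proof.

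The main obstacle is the coupling: the sign of each $I'$, $Q'$, $R'$, $B'$ on its vanishing face depends on another variable still being nonnegative, and $S'$ on $\{S=0\}$ needs $R\ge 0$. So the argument cannot be done one variable at a time in isolation; it must be done simultaneously for all five variables, which is exactly why the $t^\ast$-contradiction (or a single application of a vector-valued invariance theorem) is the right device rather than five separate scalar arguments. Care is also needed because the coefficient $\frac{\beta B}{\kappa+B}$ is only locally Lipschitz for $B\ge 0$ (it is well defined since $\kappa>0$), so one should note that existence and uniqueness of local solutions hold and that $\frac{B}{\kappa+B}\in[0,1)$ stays bounded, which is what makes the integrating-factor representations legitimate.
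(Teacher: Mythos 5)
Your proposal is correct and follows essentially the same route as the paper: the paper's proof likewise evaluates each component of the vector field on the face where that component vanishes and then invokes a standard positive-invariance lemma (Lemma~2 of Yang, Chen and Chen) to conclude that the nonnegative orthant is forward invariant. If anything, your version is slightly more careful, since you note that the boundary derivatives are only nonnegative (not strictly positive) and that the argument must handle all five variables simultaneously.
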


\begin{proof}
We have
\begin{equation*}
\begin{cases}
\frac{d S(t)}{d t}\Bigg|_{\xi(S)} &= \Lambda + \omega R(t) > 0 \, , \\
\frac{d I(t)}{d t}\Bigg|_{\xi(I)} &= \frac{\beta B(t)}{\kappa+B(t)}S(t) > 0 \, , \\
\frac{d Q(t)}{d t}\Bigg|_{\xi(Q)} &= \delta I(t) > 0 \, , \\
\frac{d R(t)}{d t}\Bigg|_{\xi(R)} &= \varepsilon Q(t) > 0 \, , \\
\frac{d B(t)}{d t}\Bigg|_{\xi(B)} &= \eta I(t) > 0 \, ,
\end{cases}
\end{equation*}
where $\xi(\upsilon)=\left\{\upsilon(t)=0 \text{ and } 
S, I, Q, R, B \in C(\R_0^+,\mathbb{R}_0^+)\right\}$
and $\upsilon \in \{S, I, Q, R, B\}$.
Therefore, due to Lemma~2 in \cite{Yang:CMA:1996}, 
any solution of system \eqref{ModeloColera} is such that 
$(S(t), I(t), Q(t), R(t), B(t)) \in (\R_0^+)^5$
for all $t \geq 0$.  
\end{proof}

Next Lemma~\ref{lema:Inv} shows that it is sufficient 
to consider the dynamics of the flow generated by 
\eqref{ModeloColera}--\eqref{eq:init:cond} 
in a certain region $\Omega$. 

\begin{lemma}
\label{lema:Inv}
Let 
\begin{equation}
\label{eqN:1}
\Omega_H = \left\{ (S, I, Q, R) \in \left(\R_0^+\right)^4 \, | \, 0 
\leq S(t) + I(t) + Q(t) + R(t) \leq \frac{\Lambda}{\mu} \right\} 
\end{equation}
and
\begin{equation}
\label{eq:maj:B}
\Omega_B = \left\{ B \in \R_0^+ \, | \, 0 \leq B(t)  
\leq \frac{\Lambda\eta}{\mu d} \right\}.
\end{equation}
Define 
\begin{equation}
\label{eq:Omega}
\Omega = \Omega_H \times \Omega_B.
\end{equation}
If $N(0) \leq \frac{\Lambda}{\mu}$ and $B(0) \leq \frac{\Lambda\eta}{\mu d}$, 
then the region $\Omega$ is positively invariant for model \eqref{ModeloColera} 
with nonnegative initial conditions \eqref{eq:init:cond} in $(\R_0^+)^5$.
\end{lemma}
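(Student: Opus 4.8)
The plan is to treat the human sub-system and the bacterial compartment separately, in that order, using the standard comparison (Gronwall) technique for scalar linear differential inequalities together with the positivity already established in the preceding lemma.

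First I would set $N(t) = S(t) + I(t) + Q(t) + R(t)$ and add the first four equations of \eqref{ModeloColera}, which gives $N'(t) = \Lambda - \mu N(t) - \alpha_1 I(t) - \alpha_2 Q(t)$. Since the previous lemma guarantees $I(t) \ge 0$ and $Q(t) \ge 0$, this yields the differential inequality $N'(t) \le \Lambda - \mu N(t)$. Solving it (or applying Gronwall's inequality) produces $N(t) \le \frac{\Lambda}{\mu} + \left( N(0) - \frac{\Lambda}{\mu} \right) e^{-\mu t}$, so the hypothesis $N(0) \le \frac{\Lambda}{\mu}$ forces $N(t) \le \frac{\Lambda}{\mu}$ for all $t \ge 0$; together with $N(t) \ge 0$, which again comes from the positivity lemma, this shows that $(S, I, Q, R)(t) \in \Omega_H$ whenever the initial data lie in $\Omega_H$.

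Next, for the bacterial compartment I would use the last equation $B'(t) = \eta I(t) - d B(t)$ together with the bound $I(t) \le N(t) \le \frac{\Lambda}{\mu}$ just obtained, to get $B'(t) \le \frac{\eta \Lambda}{\mu} - d B(t)$. The same scalar comparison argument then gives $B(t) \le \frac{\Lambda \eta}{\mu d} + \left( B(0) - \frac{\Lambda \eta}{\mu d} \right) e^{-d t} \le \frac{\Lambda \eta}{\mu d}$ under the hypothesis $B(0) \le \frac{\Lambda \eta}{\mu d}$, while $B(t) \ge 0$ by the positivity lemma; hence $B(t) \in \Omega_B$. Combining the two parts shows that any trajectory starting in $\Omega = \Omega_H \times \Omega_B$ stays in $\Omega$ for all $t \ge 0$, i.e.\ $\Omega$ is positively invariant for \eqref{ModeloColera}.

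The argument is essentially routine, and I do not expect a genuine obstacle; the only points that require a little care are its sequential structure — the bound on $B$ becomes available only after the bound on $N$ (hence on $I$) has been established — and the repeated appeal to the previous lemma, which is what allows one both to drop the nonpositive terms $-\alpha_1 I - \alpha_2 Q$ in the estimate for $N'$ and to obtain the lower bounds $N(t) \ge 0$ and $B(t) \ge 0$ needed so that the trajectory actually lies in the closed set $\Omega$.
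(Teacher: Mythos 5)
Your proposal is correct and follows essentially the same route as the paper's proof: split off the human sub-system, add the first four equations, drop the nonpositive terms $-\alpha_1 I - \alpha_2 Q$ to get $N' \leq \Lambda - \mu N$, and then bound $B$ via $I \leq N \leq \Lambda/\mu$ with the same scalar comparison argument. Your version merely spells out the explicit exponential solution and the appeal to positivity more carefully than the paper does.
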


\begin{proof}
Let us split system \eqref{ModeloColera} into two parts: 
the human population, i.e., $S(t)$, $I(t)$, $Q(t)$ and $R(t)$, 
and the pathogen population, i.e., $B(t)$.
Adding the first four equations of system \eqref{ModeloColera} gives
\begin{equation*}
N'(t) = S'(t)+I'(t)+Q'(t)+R'(t)=\Lambda-\mu N(t)
-\alpha_1 I(t)-\alpha_2 Q(t) \leq \Lambda-\mu N(t) \, .
\end{equation*}
Assuming that $N(0) \leq \frac{\Lambda}{\mu}$, we conclude
that $N(t) \leq  \frac{\Lambda}{\mu}$. For this reason,
\eqref{eqN:1} defines the biologically feasible region 
for the human population. For the pathogen population, 
it follows that 
\begin{equation*}
B'(t) = \eta I(t)- d B(t) \leq \eta \frac{\Lambda}{\mu} - d B(t) \, . 
\end{equation*}
If $B(0) \leq \frac{\Lambda\eta}{\mu d}$, then $B(t) \leq \frac{\Lambda\eta}{\mu d}$
and, in agreement, \eqref{eq:maj:B} defines the biologically feasible region 
for the pathogen population. From \eqref{eqN:1} and \eqref{eq:maj:B}, we know 
that $N(t)$ and $B(t)$ are bounded for all $t \geq 0$. Therefore, every solution 
of system \eqref{ModeloColera} with initial condition 
in $\Omega$ remains in $\Omega$.
\end{proof}

In region $\Omega$ defined by \eqref{eq:Omega}, 
our model is epidemiologically and mathematically well posed 
in the sense of \cite{Hethcote}. In other words, every solution 
of the model \eqref{ModeloColera} with initial conditions in 
$\Omega$ remains in $\Omega$ for all $t \geq 0$.


\subsection{Equilibrium points and stability analysis}
\label{sec:3.2}

\indent 

The disease-free equilibrium (DFE) of model \eqref{ModeloColera} is given by
\begin{equation}
\label{eq:DFE}
E^0=(S^0,I^0,Q^0,R^0,B^0)=\left(\frac{\Lambda}{\mu},0,0,0,0\right).
\end{equation}
Next, following the approach of \cite{Mwasa,Driessche},
we compute the basic reproduction number $R_0$.

\begin{proposition}[Basic reproduction number of \eqref{ModeloColera}]
\label{prop:R0}
The basic reproduction number of model \eqref{ModeloColera} is given by
\begin{align}
\label{R0}
R_0=\frac{\beta\Lambda\eta}{\mu\kappa d(\delta+\alpha_1+\mu)}.
\end{align}
\end{proposition}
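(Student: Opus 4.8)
The plan is to apply the next-generation matrix method of van den Driessche and Watmough (reference \cite{Driessche}), exactly as signalled in the text. First I would identify the ``infected'' compartments of the model: these are $I$ and $B$, since they carry the infection and vanish at the disease-free equilibrium $E^0$; the compartments $S$, $Q$, $R$ are treated as the remaining variables. I would then split the right-hand sides of the $I'$ and $B'$ equations into a vector $\mathcal{F}$ of \emph{new-infection} terms and a vector $\mathcal{V}$ of \emph{transition} (outflow minus remaining inflow) terms. The natural choice is
\begin{equation*}
\mathcal{F} = \begin{pmatrix} \dfrac{\beta B}{\kappa+B}\,S \\[2ex] 0 \end{pmatrix},
\qquad
\mathcal{V} = \begin{pmatrix} (\delta+\alpha_1+\mu)\,I \\[1ex] d B - \eta I \end{pmatrix},
\end{equation*}
placing the bacterial shedding term $\eta I$ inside $\mathcal{V}$ since it represents transfer within the infected subsystem rather than a genuinely new host infection (this bookkeeping choice is the one consistent with \cite{Driessche} and with \cite{Mwasa}).

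Next I would compute the Jacobian matrices $F = \left[\partial \mathcal{F}_i/\partial x_j\right]$ and $V = \left[\partial \mathcal{V}_i/\partial x_j\right]$ evaluated at $E^0 = (\Lambda/\mu,0,0,0,0)$, with $x = (I,B)$. Differentiating and using $B^0=0$, $S^0=\Lambda/\mu$ gives
\begin{equation*}
F = \begin{pmatrix} 0 & \dfrac{\beta \Lambda}{\mu\kappa} \\[2ex] 0 & 0 \end{pmatrix},
\qquad
V = \begin{pmatrix} \delta+\alpha_1+\mu & 0 \\[1ex] -\eta & d \end{pmatrix}.
\end{equation*}
I would then invert $V$ (it is lower triangular, so $V^{-1}$ is immediate) and form the next-generation matrix $F V^{-1}$. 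The basic reproduction number $R_0$ is the spectral radius $\rho(F V^{-1})$; since $F V^{-1}$ has rank one, its only nonzero eigenvalue equals its trace, which works out to $\dfrac{\beta\Lambda\eta}{\mu\kappa d(\delta+\alpha_1+\mu)}$, matching \eqref{R0}.

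The computation itself is routine linear algebra once the decomposition is fixed, so I do not expect a genuine obstacle in the calculus. The one genuinely delicate point — and the only place where a careless reader could go wrong — is the \emph{splitting} of the vector field into $\mathcal{F}$ and $\mathcal{V}$: one must resist the temptation to call $\eta I$ a ``new infection'' term (it would produce a different, incorrect formula), and one should check that the hypotheses of the van den Driessche--Watmough theorem hold here, namely that $\mathcal{F} \geq 0$ componentwise, that $\mathcal{V}$ has the required sign structure, and that $V$ is a non-singular M-matrix (true here since its diagonal entries are positive and its only off-diagonal entry $-\eta$ is non-positive, with positive column sums). With the decomposition justified, the formula for $R_0$ follows directly.
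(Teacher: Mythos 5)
Your proposal is correct and follows essentially the same route as the paper: the next-generation matrix method of \cite{Driessche}, with the same crucial bookkeeping choice of placing the shedding term $\eta I$ in $\mathcal{V}$ rather than in $\mathcal{F}$. The only difference is that you restrict at the outset to the infected subsystem $(I,B)$ and work with $2\times 2$ matrices, whereas the paper carries all five compartments and computes $\rho(F_0V_0^{-1})$ for $5\times 5$ matrices; both computations yield $R_0=\dfrac{\beta\Lambda\eta}{\mu\kappa d(\delta+\alpha_1+\mu)}$.
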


\begin{proof}
Consider that $\mathcal{F}_i(t)$ is the rate of appearance 
of new infections in the compartment associated with index $i$, 
$\mathcal{V}_i^+(t)$ is the rate of transfer of ``individuals'' 
into the compartment associated with index $i$ by all other means 
and $\mathcal{V}_i^-(t)$ is the rate of transfer of ``individuals'' 
out of compartment associated with index $i$. In this way, the matrices 
$\mathcal{F}(t)$, $\mathcal{V}^+(t)$ and $\mathcal{V}^-(t)$, 
associated with model \eqref{ModeloColera}, are given by
\begin{align*}
\mathcal{F}(t)=\left[
\begin{matrix}
0\\
\displaystyle\frac{\beta B(t)S(t)}{\kappa+B(t)}\\
0\\
0\\
0
\end{matrix}
\right],
\quad \mathcal{V}^+(t)=\left[
\begin{matrix}
\Lambda+\omega R(t)\\
0\\
\delta I(t)\\
\varepsilon Q(t)\\
\eta I(t)
\end{matrix}
\right]
\quad \text{ and } \quad
\mathcal{V}^-(t)=\left[
\begin{matrix}
\displaystyle\frac{\beta B(t)S(t)}{\kappa+B(t)}+\mu S(t)\\
a_1I(t)\\
a_2Q(t)\\
a_3R(t)\\
d B(t)
\end{matrix}\right],
\end{align*}
where 
\begin{equation}
\label{eq:as}
a_1=\delta+\alpha_1+\mu, 
\quad a_2=\varepsilon+\alpha_2+\mu 
\ \text{ and } \ a_3=\omega+\mu.
\end{equation}
Therefore, considering $\mathcal{V}(t)=\mathcal{V}^-(t)-\mathcal{V}^+(t)$,
we have that 
\begin{align*}
\left[
\begin{matrix}
S'(t) & I'(t) & Q'(t) & R'(t) & B'(t)
\end{matrix}\right]^T=\mathcal{F}(t)-\mathcal{V}(t).
\end{align*}
The Jacobian matrices of $\mathcal{F}(t)$ 
and of $\mathcal{V}(t)$ are, respectively, given by
\begin{align*}
F=\left[\begin{matrix}
0 & 0 & 0 & 0 & 0 \\
\displaystyle\frac{\beta B(t)}{\kappa+B(t)} 
& 0 & 0 & 0 & 
\displaystyle\frac{\beta\kappa S(t)}{(\kappa+B(t))^2} \\
0 & 0 & 0 & 0 & 0 \\
0 & 0 & 0 & 0 & 0 \\
0 & 0 & 0 & 0 & 0 \\
\end{matrix}\right]
\end{align*}
and
\begin{align*}
V=\left[\begin{matrix}
\displaystyle\frac{\beta B(t)}{\kappa+B(t)}+\mu 
& 0 & 0 & -\omega & 
\displaystyle\frac{\beta\kappa S(t)}{(\kappa+B(t))^2} \\
0 & a_1 & 0 & 0 & 0 \\
0 & -\delta & a_2 & 0 & 0 \\
0 & 0 & -\varepsilon & a_3 & 0 \\
0 & -\eta & 0 & 0 & d \\
\end{matrix}\right].
\end{align*}
In the disease-free equilibrium $E^0$ \eqref{eq:DFE},  
we obtain the matrices $F_0$ and $V_0$ given by
\begin{align*}
F_0=\left[\begin{matrix}
0 & 0 & 0 & 0 & 0 \\
0 & 0 & 0 & 0 & \displaystyle\frac{\beta \Lambda}{\mu\kappa} \\
0 & 0 & 0 & 0 & 0 \\
0 & 0 & 0 & 0 & 0 \\
0 & 0 & 0 & 0 & 0 \\
\end{matrix}\right]
\quad \text{ and } \quad
V_0=\left[\begin{matrix}
\mu & 0 & 0 & -\omega & \displaystyle\frac{\beta \Lambda}{\mu\kappa} \\
0 & a_1 & 0 & 0 & 0 \\
0 & -\delta & a_2 & 0 & 0 \\
0 & 0 & -\varepsilon & a_3 & 0 \\
0 & -\eta & 0 & 0 & d \\
\end{matrix}\right].
\end{align*}
The basic reproduction number 
of model \eqref{ModeloColera} is then given by
\begin{align*}
R_0=\rho(F_0V_0^{-1})=\frac{\beta\Lambda\eta}{\mu\kappa da_1}
=\frac{\beta\Lambda\eta}{\mu\kappa d(\delta+\alpha_1+\mu)}.
\end{align*}
This concludes the proof.
\end{proof}

Now we prove the local stability 
of the disease-free equilibrium $E^0$. 

\begin{theorem}[Stability of the DFE \eqref{eq:DFE}] 
The disease-free equilibrium $E^0$ of model \eqref{ModeloColera} is
\begin{enumerate}
\item Locally asymptotic stable, 
if $\beta\Lambda\eta<\mu\kappa d(\delta+\alpha_1+\mu)$;

\item Unstable, if $\beta\Lambda\eta>\mu\kappa d(\delta+\alpha_1+\mu)$.
\end{enumerate}
Moreover, if $\beta\Lambda\eta=\mu\kappa d(\delta+\alpha_1+\mu)$, 
then a critical case occurs.
\end{theorem}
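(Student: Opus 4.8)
The plan is to study the local stability of $E^0$ through the spectrum of the Jacobian matrix of system \eqref{ModeloColera} evaluated at $E^0$. First I would compute this Jacobian. Using the abbreviations $a_1,a_2,a_3$ of \eqref{eq:as} and the fact that at $E^0$ one has $B^0=0$ and $S^0=\Lambda/\mu$, so that the derivative of $\frac{\beta B}{\kappa+B}S$ with respect to $B$ reduces to $\frac{\beta\Lambda}{\mu\kappa}$ there (while its derivative with respect to $S$ vanishes), the Jacobian is
\[
J(E^0)=\begin{pmatrix}
-\mu & 0 & 0 & \omega & -\frac{\beta\Lambda}{\mu\kappa}\\
0 & -a_1 & 0 & 0 & \frac{\beta\Lambda}{\mu\kappa}\\
0 & \delta & -a_2 & 0 & 0\\
0 & 0 & \varepsilon & -a_3 & 0\\
0 & \eta & 0 & 0 & -d
\end{pmatrix}.
\]

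Next I would deflate the characteristic equation by exploiting the sparse structure of $J(E^0)$ rather than expanding a $5\times5$ determinant directly. The first column has a single nonzero entry, so $-\mu$ is an eigenvalue; after deleting the first row and column, the column belonging to $R$ has a single nonzero entry, so $-a_3=-(\omega+\mu)$ is an eigenvalue; after deleting it, the column belonging to $Q$ has a single nonzero entry, so $-a_2=-(\varepsilon+\alpha_2+\mu)$ is an eigenvalue. The two remaining eigenvalues are the roots of the characteristic polynomial of the $2\times2$ block acting on $(I,B)$, namely
\[
p(\lambda)=\lambda^2+(a_1+d)\lambda+\Big(a_1 d-\frac{\beta\Lambda\eta}{\mu\kappa}\Big).
\]

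Then I would apply the Routh--Hurwitz criterion to $p$. Since $a_1+d>0$ unconditionally, the sign of the constant term $a_1 d-\frac{\beta\Lambda\eta}{\mu\kappa}=\frac{1}{\mu\kappa}\big(\mu\kappa d(\delta+\alpha_1+\mu)-\beta\Lambda\eta\big)$ governs everything. If $\beta\Lambda\eta<\mu\kappa d(\delta+\alpha_1+\mu)$, both roots of $p$ have negative real part, and together with $-\mu,-a_2,-a_3<0$ every eigenvalue of $J(E^0)$ has negative real part, so $E^0$ is locally asymptotically stable; this matches $R_0<1$ through \eqref{R0} and \cite{Driessche}. If $\beta\Lambda\eta>\mu\kappa d(\delta+\alpha_1+\mu)$, the two roots of $p$ have negative product, hence one of them is real and positive, and $E^0$ is unstable. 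If $\beta\Lambda\eta=\mu\kappa d(\delta+\alpha_1+\mu)$, the constant term vanishes, $\lambda=0$ is a simple root of $p$, and the linearization is inconclusive --- this is the announced critical case, whose full resolution would require a centre-manifold reduction.

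The only place needing care is the iterated deflation of the characteristic determinant: at each step one must verify that the relevant column really does have exactly one nonzero entry once the previously chosen rows and columns have been removed, and that the corresponding cofactor sign is $+1$. Once that bookkeeping is in place, the rest is the elementary Routh--Hurwitz analysis of a quadratic, so I do not expect a genuine obstacle; the work is essentially organizational, keeping the $5\times5$ eigenvalue computation transparent.
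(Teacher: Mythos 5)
Your proposal is correct and follows essentially the same route as the paper: the Jacobian you write down is exactly the paper's matrix $F_0-V_0$, the deflation yields the same eigenvalues $-\mu$, $-a_2$, $-a_3$ together with the same quadratic $\chi^2+(a_1+d)\chi+a_1d-\frac{\beta\Lambda\eta}{\mu\kappa}$, and the sign analysis of its constant term (Routh--Hurwitz) gives the three cases identically. The only cosmetic difference is that you also note explicitly that in the unstable case the negative product of the roots forces a real positive eigenvalue, which the paper leaves implicit.
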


\begin{proof}
The characteristic polynomial associated with the linearized 
system of model \eqref{ModeloColera} is given by
\begin{align*}
p(\chi)=\det(F_0-V_0-\chi I_5).
\end{align*} 
In order to compute the roots of polynomial $p$, we have that
\begin{equation*}
\left|\begin{matrix}
-\mu-\chi & 0 & 0 & \omega & -\displaystyle\frac{\beta \Lambda}{\mu\kappa} \\[0.25cm]
0 & -a_1-\chi & 0 & 0 & \displaystyle\frac{\beta \Lambda}{\mu\kappa} \\[0.25cm]
0 & \delta & -a_2-\chi & 0 & 0 \\[0.25cm]
0 & 0 & \varepsilon & -a_3-\chi & 0 \\[0.25cm]
0 & \eta & 0 & 0 & -d-\chi
\end{matrix}\right|=0,
\end{equation*}
that is,
$$
\chi=-\mu \vee \chi=-a_2 \vee \chi=-a_3 \vee \tilde{p}(\chi)=\chi^2+(a_1+d)\chi+a_1d-\frac{\beta\Lambda\eta}{\mu\kappa}=0.
$$
By Routh's criterion (see, e.g., p.~55--56 of \cite{Olsder}), if all coefficients 
of polynomial $\tilde{p}(\chi)$ have the same signal, then the roots 
of $\tilde{p}(\chi)$ have negative real part and, consequently, 
the DFE $E^0$ is locally asymptotic stable. The coefficients 
of $\tilde{p}(\chi)$ are $\tilde{p}_1=1>0$, $\tilde{p}_2=a_1+d>0$ 
and $\tilde{p}_3=a_1d-\frac{\beta\Lambda\eta}{\mu\kappa}$. 
Therefore, the DFE $E^0$ is
\begin{enumerate}
\item Locally asymptotic stable, if
$a_1d-\frac{\beta\Lambda\eta}{\mu\kappa}>0
\Leftrightarrow
\beta\Lambda\eta<\mu\kappa d(\delta+\alpha_1+\mu)$;
	
\item Unstable, if $a_1d-\frac{\beta\Lambda\eta}{\mu\kappa}
<0\Leftrightarrow\beta\Lambda\eta>\mu\kappa d(\delta+\alpha_1+\mu)$.
\end{enumerate}
A critical case is obtained if $a_1d=\frac{\beta\Lambda\eta}{\mu\kappa}
\Leftrightarrow\beta\Lambda\eta=\mu\kappa d(\delta+\alpha_1+\mu)$.
\end{proof}

Next we prove the existence of an endemic equilibrium 
when the basic reproduction number \eqref{R0} is greater than one.

\begin{proposition}[Endemic equilibrium]
\label{prop:EE}
If $R_0>1$, then the model \eqref{ModeloColera} 
has an endemic equilibrium given by
\begin{equation}
\label{EndemicEquilibrium}
E^*=(S^*,I^*,Q^*,R^*,B^*)
=\left(\frac{\Lambda a_1a_2a_3}{D},\frac{\Lambda a_2a_3\lambda^*}{D},
\frac{\Lambda\delta a_3\lambda^*}{D},\frac{\Lambda \delta\varepsilon\lambda^*}{D},
\frac{\Lambda\eta a_2a_3\lambda^*}{Dd}\right),
\end{equation}
where $a_1=\delta+\alpha_1+\mu$, $a_2=\varepsilon+\alpha_2+\mu$, 
$a_3=\omega+\mu$, $D=a_1a_2a_3(\lambda^*+\mu)-\delta\varepsilon\omega\lambda^*$ 
and $\lambda^*=\frac{\beta B^*}{\kappa+B^*}$.
\end{proposition}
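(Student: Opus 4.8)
The plan is to impose the equilibrium conditions by setting the right-hand sides of \eqref{ModeloColera} equal to zero and to solve the resulting algebraic system step by step, expressing every component in terms of the single quantity $\lambda^* = \frac{\beta B^*}{\kappa + B^*}$. First I would work from the bottom of the system upward: the third, fourth and fifth steady-state equations give $Q^*$, $R^*$ and $B^*$ as multiples of $I^*$, namely $Q^* = \frac{\delta}{a_2} I^*$, $R^* = \frac{\delta\varepsilon}{a_2 a_3} I^*$ and $B^* = \frac{\eta}{d} I^*$; the second equation gives $I^* = \frac{\lambda^*}{a_1} S^*$; and substituting $R^*$ into the first equation and collecting terms yields $\Lambda = \frac{S^*}{a_1 a_2 a_3}\bigl[a_1 a_2 a_3(\lambda^*+\mu) - \delta\varepsilon\omega\lambda^*\bigr] = \frac{S^* D}{a_1 a_2 a_3}$. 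This already produces the claimed formulas \eqref{EndemicEquilibrium} for $S^*, I^*, Q^*, R^*, B^*$ as functions of $\lambda^*$.

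It remains to determine $\lambda^*$ itself and to check that the resulting point lies in $(\R_0^+)^5$ (in fact in $\Omega$) precisely when $R_0 > 1$. For this I would combine the identity $\lambda^*(\kappa + B^*) = \beta B^*$, i.e. $B^* = \frac{\kappa\lambda^*}{\beta - \lambda^*}$, with the expression $B^* = \frac{\Lambda\eta a_2 a_3 \lambda^*}{Dd}$ obtained above, and equate the two. The value $\lambda^* = 0$ is a solution and corresponds to the DFE $E^0$; dividing through by $\lambda^*$ and using that $D$ is affine in $\lambda^*$ reduces the remaining equation to the single linear equation $\kappa d D = \Lambda\eta a_2 a_3(\beta - \lambda^*)$, whose solution can be written, after recognizing $\Lambda\eta\beta - \mu\kappa d a_1$ as $\mu\kappa d a_1(R_0 - 1)$ via \eqref{R0}, as
\[
\lambda^* = \frac{a_2 a_3\,\mu\kappa d a_1\,(R_0 - 1)}{\kappa d\,(a_1 a_2 a_3 - \delta\varepsilon\omega) + \Lambda\eta a_2 a_3}.
\]

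The final step is positivity. The denominator above is positive because $a_1 a_2 a_3 = (\delta+\alpha_1+\mu)(\varepsilon+\alpha_2+\mu)(\omega+\mu) > \delta\varepsilon\omega$ whenever $\mu > 0$, so $\lambda^* > 0$ if and only if $R_0 > 1$. Once $\lambda^* > 0$, one has $D = \lambda^*(a_1 a_2 a_3 - \delta\varepsilon\omega) + \mu a_1 a_2 a_3 > 0$, so that $S^*, I^*, Q^*, R^*, B^*$ are all strictly positive; and since $\kappa d D = \Lambda\eta a_2 a_3(\beta - \lambda^*)$ with $D > 0$, one even gets $0 < \lambda^* < \beta$, consistent with $\lambda^* = \frac{\beta B^*}{\kappa + B^*}$. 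I expect the only mildly delicate point to be exactly this bookkeeping — isolating the factor $R_0 - 1$ in the numerator and justifying the sign of the denominator via the inequality $a_1 a_2 a_3 > \delta\varepsilon\omega$ — since everything else is a direct, if slightly lengthy, back-substitution through the five equilibrium equations.
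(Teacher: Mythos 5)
Your proposal is correct and follows essentially the same route as the paper: back-substitute through the equilibrium equations to express $(S^*,I^*,Q^*,R^*,B^*)$ in terms of $\lambda^*$, then solve the defining relation $\lambda^*=\frac{\beta B^*}{\kappa+B^*}$ for the nonzero root, isolate the factor $R_0-1$ in the numerator, and conclude positivity from $a_1a_2a_3>\delta\varepsilon\omega$. The only difference is that you additionally make explicit the checks that $D>0$ and $0<\lambda^*<\beta$, which the paper leaves implicit.
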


\begin{proof}
In order to exist disease, the rate of infection must satisfy 
the inequality $\frac{\beta B(t)}{\kappa+B(t)}>0$. Considering 
that $E^*=(S^*,I^*,Q^*,R^*,B^*)$ is an endemic equilibrium 
of \eqref{ModeloColera}, let us define $\lambda^*$ to be the 
rate of infection in the presence of disease, that is,
\begin{align*}
\lambda^*=\frac{\beta B^*}{\kappa+B^*}.
\end{align*}
Using \eqref{eq:as}, considering 
$D=a_1a_2a_3(\lambda^*+\mu)-\delta\varepsilon\omega\lambda^*$
and setting the left-hand side of the equations of \eqref{ModeloColera} 
equal to zero, we obtain the endemic equilibrium \eqref{EndemicEquilibrium}.
Thus, we can compute $\lambda^*$:
\begin{equation*}
\begin{split}
\lambda^*=\frac{\beta B^*}{\kappa+B^*}
=\frac{\beta\Lambda\eta a_2a_3\lambda^*}{\kappa Dd
+\Lambda\eta a_2a_3\lambda^*}
&\Leftrightarrow
\lambda^*\left(1-\frac{\beta\Lambda\eta a_2a_3}{\kappa Dd
+\Lambda\eta a_2a_3\lambda^*}\right)=0\\
&\Leftrightarrow
\lambda^*\left(\frac{\kappa Dd+\Lambda\eta a_2a_3\lambda^*
-\beta\Lambda\eta a_2a_3}{\kappa Dd+\Lambda\eta a_2a_3\lambda^*}\right)=0.
\end{split}
\end{equation*}
The solution $\lambda^*=0$ does not make sense in this context. 
Therefore, we only consider the solution of
$\kappa Dd+\Lambda\eta a_2a_3\lambda^*-\beta\Lambda\eta a_2a_3=0$. 
We have,
\begin{equation*}
\begin{split}
\kappa &Dd+\Lambda\eta a_2a_3\lambda^*-\beta\Lambda\eta a_2a_3=0\\
&\Leftrightarrow
\kappa(a_1a_2a_3(\lambda^*+\mu)-\delta\varepsilon\omega\lambda^*)d
+\Lambda\eta a_2a_3\lambda^*-\beta\Lambda\eta a_2a_3=0\\
&\Leftrightarrow
(\kappa(a_1a_2a_3-\delta\varepsilon\omega)d+\Lambda\eta a_2a_3)\lambda^*
=-\kappa a_1a_2a_3\mu d+\beta\Lambda\eta a_2a_3\\
&\Leftrightarrow
\lambda^*=\frac{a_2a_3(\beta\Lambda\eta-\mu\kappa d a_1)}{\kappa(a_1a_2a_3
-\delta\varepsilon\omega)d+\Lambda\eta a_2a_3}
=\frac{\mu\kappa d a_1a_2a_3(R_0-1)}{\kappa(a_1a_2a_3
-\delta\varepsilon\omega)d+\Lambda\eta a_2a_3}.
\end{split}
\end{equation*}
Note that
$a_1a_2a_3-\delta\varepsilon\omega=(\delta+\alpha_1+\mu)(\varepsilon
+\alpha_2+\mu)(\omega+\mu)-\delta\varepsilon\omega>0$
because $\alpha_1, \alpha_2 \geq 0$ and $\mu>0$. Furthermore, 
since $\kappa$, $d$, $\Lambda$, $\eta>0$, we have that
$\mu\kappa d a_1a_2a_3$ and 
$\kappa(a_1a_2a_3-\delta\varepsilon\omega)d+\Lambda\eta a_2a_3$ 
are positive. Concluding, if $R_0>1$, then $\lambda^*>0$ and, consequently, 
the model \eqref{ModeloColera} has an endemic equilibrium 
given by \eqref{EndemicEquilibrium}.
\end{proof}

We end this section by proving the local stability of the endemic equilibrium $E^*$. 
Our proof is based on the center manifold theory \cite{Carr}, 
as described in Theorem~4.1 of \cite{Castillo}.

\begin{theorem}[Local asymptotic stability of the endemic equilibrium \eqref{EndemicEquilibrium}] 
The endemic equilibrium $E^*$ of model \eqref{ModeloColera}
(Proposition~\ref{prop:EE}) is locally asymptotic stable for $R_0$ 
(Proposition~\ref{prop:R0}) near $1$.
\end{theorem}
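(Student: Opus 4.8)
The plan is to apply the center manifold bifurcation method of Castillo-Chavez and Song (Theorem~4.1 of \cite{Castillo}), using $\beta$ as the bifurcation parameter. First I would relabel $(x_1,x_2,x_3,x_4,x_5)=(S,I,Q,R,B)$ and write \eqref{ModeloColera} as $x'=f(x,\beta)$. Solving $R_0=1$ in \eqref{R0} for $\beta$ gives the critical value $\beta^{*}=\dfrac{\mu\kappa d(\delta+\alpha_1+\mu)}{\Lambda\eta}$. At $(E^0,\beta^{*})$ the Jacobian $J^{*}=F_0-V_0$ has, by the same factorization $\tilde p(\chi)=\chi(\chi+a_1+d)$ obtained in the proof of the DFE stability theorem, the eigenvalues $-\mu$, $-a_2$, $-a_3$, $-(a_1+d)$ together with a simple zero eigenvalue; hence all other eigenvalues have negative real part and the hypotheses of Theorem~4.1 of \cite{Castillo} are met, since $E^0$ is an equilibrium of \eqref{ModeloColera} for every value of $\beta$.

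Next I would compute a right null vector $w$ and a left null vector $v$ of $J^{*}$. Solving $J^{*}w=0$ yields, up to scaling, $w_2=1$, $w_5=\eta/d$, $w_3=\delta/a_2$, $w_4=\varepsilon\delta/(a_2a_3)$ and $w_1=\frac{1}{\mu}\bigl(\frac{\omega\varepsilon\delta}{a_2a_3}-a_1\bigr)$; solving $vJ^{*}=0$ yields $v_1=v_3=v_4=0$ and $v_5=\frac{a_1}{\eta}v_2$, and the normalization $v\cdot w=1$ fixes $v_2=d/(a_1+d)$. The key structural fact is that $v$ is supported only on the $I$ and $B$ coordinates, and the $B$-equation of \eqref{ModeloColera} is linear, so only the nonlinearity in the $I$-equation, $f_2=\frac{\beta x_5 x_1}{\kappa+x_5}-a_1x_2$, contributes to the bifurcation coefficients.

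Then I would evaluate the two quantities from Theorem~4.1 of \cite{Castillo},
\[
a=\sum_{k,i,j}v_k w_i w_j\,\frac{\partial^2 f_k}{\partial x_i\partial x_j}(E^0,\beta^{*}),
\qquad
b=\sum_{k,i}v_k w_i\,\frac{\partial^2 f_k}{\partial x_i\partial\beta}(E^0,\beta^{*}).
\]
The only nonzero second-order derivatives of $f_2$ at $(E^0,\beta^{*})$ are $\partial^2 f_2/\partial x_1\partial x_5=\beta^{*}/\kappa$, $\partial^2 f_2/\partial x_5^2=-2\beta^{*}\Lambda/(\mu\kappa^2)$ and $\partial^2 f_2/\partial x_5\partial\beta=\Lambda/(\mu\kappa)$, so the sums collapse to $a=\frac{2\beta^{*}v_2 w_5}{\kappa}\bigl(w_1-\frac{\Lambda w_5}{\mu\kappa}\bigr)$ and $b=v_2 w_5\,\frac{\Lambda}{\mu\kappa}>0$. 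It remains to sign $a$: since $a_1a_2a_3-\delta\varepsilon\omega>0$ (established in the proof of Proposition~\ref{prop:EE}) we have $w_1<0$, whence $w_1-\frac{\Lambda w_5}{\mu\kappa}<0$ and therefore $a<0$. With $a<0$ and $b>0$, case (iv) of Theorem~4.1 of \cite{Castillo} gives a forward (transcritical) bifurcation at $R_0=1$: as $R_0$ increases through $1$ the DFE $E^0$ loses its stability and the endemic equilibrium $E^{*}$ of Proposition~\ref{prop:EE}, which exists precisely for $R_0>1$, is locally asymptotically stable for $R_0$ near $1$.

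The main obstacle is the sign analysis of $a$ — in particular verifying $w_1<0$, which rests on the inequality $a_1a_2a_3-\delta\varepsilon\omega>0$ already proved — and keeping the bookkeeping straight, notably the factor $2$ arising from the symmetric pair $(i,j)=(1,5),(5,1)$ in the double sum defining $a$ and the normalization $v\cdot w=1$; once these are in hand, the rest is a direct substitution into the formulas of \cite{Castillo}.
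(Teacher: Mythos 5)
Your proposal is correct and follows essentially the same route as the paper: the center manifold method of Theorem~4.1 of \cite{Castillo} with $\beta$ as bifurcation parameter, the same eigenvectors $w$ and $v$, the same nonzero second derivatives of $f_2$, and the same signs $a<0$, $b>0$ yielding a forward bifurcation. The only (harmless) difference is that you fix the normalization $v\cdot w=1$ explicitly, whereas the paper leaves $v_2$ and $w_2$ as free positive factors.
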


\begin{proof}
To apply the method described in Theorem~4.1 of \cite{Castillo}, 
we consider a change of variables. Let 
\begin{equation}
\label{eq:def:X}
X = (x_1, x_2, x_3, x_4, x_5) = (S, I, Q, R, B).
\end{equation} 
Consequently, we have that the total number of individuals 
is given by $N=\sum_{i=1}^{4}x_i.$ Thus, the model 
\eqref{ModeloColera} can be written as follows:
\begin{align}
\label{ModeloColeraX}
\begin{cases}
x_1'(t)=f_1=\Lambda-\displaystyle\frac{\beta x_5(t)}{\kappa+x_5(t)}x_1(t)+\omega x_4(t)-\mu x_1(t)\\
x_2'(t)=f_2=\displaystyle\frac{\beta x_5(t)}{\kappa+x_5(t)}x_1(t)-(\delta+\alpha_1+\mu)x_2(t)\\
x_3'(t)=f_3=\delta x_2(t)-(\varepsilon+\alpha_2+\mu)x_3(t)\\
x_4'(t)=f_4=\varepsilon x_3(t)-(\omega+\mu)x_4(t)\\
x_5'(t)=f_5=\eta x_2(t)-dx_5(t).
\end{cases}
\end{align}
Choosing $\beta^*$ as bifurcation parameter and solving for $\beta$ from $R_0=1$, 
we obtain that
\begin{align*}
\beta^*=\frac{\mu\kappa d(\delta+\alpha_1+\mu)}{\Lambda\eta}.
\end{align*}
Considering $\beta=\beta^*$, the Jacobian of the system \eqref{ModeloColeraX} 
evaluated at $E^0$ is given by 
\begin{align*}
J_0^*=\left[
\begin{matrix}
-\mu & 0 & 0 & \omega & -\displaystyle\frac{a_1d}{\eta} \\
0 & -a_1 & 0 & 0 & \displaystyle\frac{a_1d}{\eta} \\
0 & \delta & -a_2 & 0 & 0 \\
0 & 0 & \varepsilon & -a_3 & 0 \\
0 & \eta & 0 & 0 & -d \\
\end{matrix}
\right].
\end{align*}
The eigenvalues of $J_0^*$ are $-d-a_1$, $-a_2$, $-a_3$, $-\mu$ and $0$. 
We conclude that zero is a simple eigenvalue of $J_0^*$ and all other 
eigenvalues of $J_0^*$ have negative real parts. Therefore, the center manifold
theory \cite{Carr} can be applied to study the dynamics of \eqref{ModeloColeraX} 
near $\beta=\beta^*$. Theorem 4.1 in \cite{Castillo} is used to show 
the local asymptotic stability of the endemic equilibrium point of \eqref{ModeloColeraX}, 
for $\beta$ near $\beta^*$. The Jacobian $J_0^*$ has, respectively, a right 
eigenvector and a left eigenvector (associated with the zero eigenvalue), 
$w=\left[
\begin{matrix}
w_1 & w_2 & w_3 & w_4 & w_5
\end{matrix}
\right]^T$ 
and $v=\left[
\begin{matrix}
v_1 & v_2 & v_3 & v_4 & v_5
\end{matrix}
\right]^T$, given by
\begin{align*}
w&=\left[\begin{matrix}\left(\displaystyle\frac{\delta\varepsilon\omega}{a_2a_3}
-a_1\right)\displaystyle\frac{1}{\mu} & 1 & \displaystyle\frac{\delta}{a_2} & \displaystyle\frac{\delta\varepsilon}{a_2a_3} 
& \displaystyle\frac{\eta}{d}
\end{matrix}
\right]^Tw_2
\end{align*}
and
\begin{align*}
v=\left[\begin{matrix}0 & 1 & 0 & 0 & 
\displaystyle\frac{a_1}{\eta}\end{matrix}\right]^Tv_2.
\end{align*}
Remember that $f_l$ represents the right-hand side of the 
$l$th equation of the system \eqref{ModeloColeraX} 
and $x_l$ is the state variable whose derivative is given 
by the $l$th equation for $l=1, \ldots, 5$. 
The local stability near the bifurcation point $\beta=\beta^*$ 
is determined by the signs of two associated constants $a$ and $b$ defined by
\begin{align*}
a=\sum_{k,i,j=1}^{5}v_kw_iw_j\left[\frac{\partial^2f_k}{\partial x_i
\partial x_j}(E^0)\right]_{\beta=\beta^*}
\end{align*}
and
\begin{align*}
b=\sum_{k,i=1}^{5}v_kw_i\left[\frac{\partial^2f_k}{\partial x_i
\partial \phi}(E^0)\right]_{\beta=\beta^*}
\end{align*}
with $\phi=\beta-\beta^*$. As $v_1=v_3=v_4=0$, the nonzero partial 
derivatives at the disease free equilibrium $E^0$ are given by
\begin{align*}
\frac{\partial^2f_2}{\partial x_1\partial x_5}
=\frac{\partial^2f_2}{\partial x_5\partial x_1}
=\frac{\beta}{\kappa}
\ \text{ and }\ \frac{\partial^2f_2}{\partial x_5^2}
=-\frac{2\beta\Lambda}{\kappa^2\mu}.
\end{align*}
Therefore, the constant $a$ is
\begin{align*}
a=\frac{2\eta\beta^*}{d\kappa\mu}\left(\frac{\delta\varepsilon\omega
-a_1a_2a_3}{a_2a_3}-\frac{\Lambda\eta}{d\kappa}\right)v_2w_2^2<0.
\end{align*}
Furthermore, we have that
\begin{align*}
b=&\sum_{i=1}^{5}\left(v_2w_i\left[
\frac{\partial^2f_2}{\partial x_i\partial\beta}(E^0)\right]_{\beta=\beta^*}
+v_5w_i\left[\frac{\partial^2f_5}{\partial 
x_i\partial\beta}(E^0)\right]_{\beta=\beta^*}\right)\\
=&\sum_{i=1}^{5}v_2w_i\left[\frac{\partial}{\partial x_i}\left(\frac{x_1x_5}{\kappa+x_5}\right)(E^0)\right]_{\beta=\beta^*}\\
=&\frac{v_2w_5\Lambda}{\mu\kappa}\\
=&\frac{\Lambda\eta}{\mu\kappa d}v_2w_2>0.
\end{align*}
Thus, by Theorem~4.1 in \cite{Castillo}, we conclude 
that the endemic equilibrium $E^*$ of \eqref{ModeloColera} 
is locally asymptotic stable for a value 
of the basic reproduction number $R_0$ close to $1$.
\end{proof}

In Section~\ref{Sec:model} we propose a mathematical model,
while in Section~\ref{Sec:mod:analysis} we show that it is both mathematically 
and epidemiologically well posed for the reality under investigation.
These two sections give a model to study and understand a certain reality,
but do not allow to interfere and manipulate it. This is done in 
Section~\ref{sec:ocp}, where we introduce a control that allow us to decide how many
individuals move to quarantine. Naturally, the question is then to know how to choose
such control in an optimal way. For that, we use the theory of optimal control 
\cite{Pontryagin_et_all_1962}. After the theoretical study of the optimal control 
problem done in Section~\ref{sec:ocp}, we provide in Section~\ref{sec:num:simu} 
numerical simulations for the cholera outbreak, that occurred 
in Haiti in 2010, showing how we can manipulate and improve the reality.


\section{Optimal Control Problem}
\label{sec:ocp}

\indent 

In this section, we propose and analyze an optimal control 
problem applied to cholera dynamics described by model \eqref{ModeloColera}.
We add to model \eqref{ModeloColera} a control function $u(\cdot)$ 
that represents the fraction of infected individuals $I$ that 
are submitted to treatment in quarantine until complete recovery.
Given the meaning of the control $u$, it is natural
that the control takes values in the closed set $[0, 1]$:
$u = 0$ means ``no control measure'' and $u = 1$ means all infected people
are put under quarantine. Only values of $u$ on the interval
$[0, 1]$ make sense. The model with control is given by the following system
of nonlinear ordinary differential equations:
\begin{equation}
\label{ModeloColeraControlo}
\begin{cases}
S'(t)=\Lambda-\displaystyle\frac{\beta B(t)}{\kappa+B(t)}S(t)+\omega R(t)-\mu S(t),\\
I'(t)=\displaystyle\frac{\beta B(t)}{\kappa+B(t)}S(t)-\delta u(t) I(t)-(\alpha_1+\mu)I(t),\\
Q'(t)=\delta u(t)I(t)-(\varepsilon+\alpha_2+\mu)Q(t),\\
R'(t)=\varepsilon Q(t)-(\omega+\mu)R(t),\\
B'(t)=\eta I(t)-dB(t).
\end{cases}
\end{equation}
The set $\mathcal{X}$ of admissible trajectories is given by
\begin{equation*}
\mathcal{X} = \left\{X(\cdot) \in W^{1,1}([0,T];\R^5) \, | \,  
\eqref{eq:init:cond} \text{ and } \eqref{ModeloColeraControlo} 
\text{ are satisfied}\right\}
\end{equation*}
with $X$ defined in \eqref{eq:def:X}
and the admissible control set $\mathcal{U}$ is given by
\begin{equation*}
\mathcal{U} = \left\{ u(\cdot) \in L^{\infty}([0, T]; \mathbb{R}) \,
| \,  0 \leq u (t) \leq 1 ,  \, \forall \, t \in [0, T] \, \right\} .
\end{equation*}
We consider the objective functional  
\begin{equation}
\label{costfunction}
J(X(\cdot),u(\cdot)) = \int_0^{T} \left( I(t) + B(t)
+ \frac{W}{2}u^2(t) \right) dt \, ,
\end{equation}
where the positive constant $W$ is a measure of the cost 
of the interventions associated with the control $u$, 
that is, associated with the treatment of infected individuals 
keeping them in quarantine during all the treatment period. 
Our aim is to minimize the number of infected individuals 
and the bacterial concentration, as well as the cost of 
interventions associated with the control treatment through quarantine. 
The optimal control problem consists of determining the vector function
$X^\diamond(\cdot) = \left(S^\diamond(\cdot), 
I^\diamond(\cdot), Q^\diamond(\cdot), 
R^\diamond(\cdot), B^\diamond(\cdot)\right)
\in \mathcal{X}$ associated with an admissible control
$u^\diamond(\cdot) \in \mathcal{U}$ on the time interval $[0, T]$,
minimizing the cost functional \eqref{costfunction}, \textrm{i.e.},
\begin{equation}
\label{mincostfunct}
J(X^\diamond(\cdot),u^\diamond(\cdot)) 
= \min_{(X(\cdot),u(\cdot))
\in\mathcal{X}\times\mathcal{U}} J(X(\cdot),u(\cdot)).
\end{equation}

The existence of an optimal control $u^\diamond(\cdot)$
comes from the convexity of the cost functional \eqref{costfunction}
with respect to the controls and the regularity of the system
\eqref{ModeloColeraControlo}: see, \textrm{e.g.},
\cite{Cesari_1983,Fleming_Rishel_1975}.

\begin{remark}
In optimal control theory and in its many applications is standard 
to consider objective functionals with integrands that are convex 
with respect to the control variables \cite{MR2316829}. 
Such convexity easily ensures the existence and the regularity 
of solution to the problem \cite{MR2099056} as well as  
good performance of numerical methods \cite{MR3388961}. 
In our case, we considered a quadratic expression of the
control in order to indicate nonlinear costs potentially
arising at high treatment levels, as proposed in \cite{Neilan}.
\end{remark}

According to the Pontryagin Maximum Principle \cite{Pontryagin_et_all_1962},
if $u^\diamond(\cdot) \in \mathcal{U}$ is optimal for problem
\eqref{mincostfunct} with fixed final time $T$, then there exists
a nontrivial absolutely continuous mapping $\lambda : [0, T] \to \mathbb{R}^5$,
$\lambda(t) = \left(\lambda_1(t), \lambda_2(t), \lambda_3(t), 
\lambda_4(t), \lambda_5(t)\right)$, called the \emph{adjoint vector}, such that
\begin{enumerate}
\item the control system
\begin{equation*}
S' = \frac{\partial H}{\partial \lambda_1} \, , \quad
I' = \frac{\partial H}{\partial \lambda_2} \, , \quad
Q' = \frac{\partial H}{\partial \lambda_3} \, , \quad
R' = \frac{\partial H}{\partial \lambda_4} \, , \quad
B' = \frac{\partial H}{\partial \lambda_5};
\end{equation*}
\item the adjoint system
\begin{equation}
\label{adjsystemPMP}
\lambda_1' = -\frac{\partial H}{\partial S} \, , \quad
\lambda_2' = -\frac{\partial H}{\partial I} \, , \quad
\lambda_3' = -\frac{\partial H}{\partial Q} \, , \quad
\lambda_4' = -\frac{\partial H}{\partial R} \, , \quad
\lambda_5' = -\frac{\partial H}{\partial B};
\end{equation}

\item and the minimization condition
\begin{equation}
\label{maxcondPMP}
H(X^\diamond(t), u^\diamond(t), \lambda^\diamond(t))
= \min_{0 \leq u \leq 1}
H(X^\diamond(t), u, \lambda^\diamond(t))
\end{equation}
hold for almost all $t \in [0, T]$, where the function $H$ defined by
\begin{equation*}
\begin{split}
H= H(X, u, \lambda)
&=I + B  + \frac{W}{2}u^2
+ \lambda_1 \left( \Lambda-\displaystyle\frac{\beta B}{\kappa+B}S
+\omega R-\mu S \right)\\
&+ \lambda_2 \left( \displaystyle\frac{\beta B}{\kappa+B}S
-\delta u I-(\alpha_1+\mu)I \right)
+ \lambda_3 \left(\delta u I
-(\varepsilon+\alpha_2+\mu)Q \right)\\
&+ \lambda_4 \left( \varepsilon Q-(\omega+\mu)R \right)
+ \lambda_5 \left( \eta I-d B \right)
\end{split}
\end{equation*}
is called the \emph{Hamiltonian}. 

\item Moreover, the following transversality conditions also hold:
\begin{equation}
\label{eq:TC:PMP}
\lambda_i(T) = 0, \quad
i =1,\ldots, 5.
\end{equation}
\end{enumerate}

\begin{theorem}
The optimal control problem \eqref{mincostfunct} 
with fixed final time $T$ admits a unique optimal solution 
$\left(S^\diamond(\cdot), I^\diamond(\cdot), Q^\diamond(\cdot), 
R^\diamond(\cdot), B^\diamond(\cdot)\right)$ associated 
with an optimal control $u^\diamond(t)$ for $t \in [0, T]$.
Moreover, there exist adjoint functions 
$\lambda_i^\diamond(\cdot)$, $i = 1, \ldots, 5$, such that
\begin{equation}
\label{adjoint_function}
\begin{cases}
\lambda^{\diamond'}_1(t) 
= \lambda_1^\diamond(t)\left(\displaystyle{
\frac{\beta B^\diamond(t)}{\kappa+B^\diamond(t)}}+\mu \right) 
-{\lambda_2^\diamond(t) 
\displaystyle\frac{\beta B^\diamond(t)}{\kappa+B^\diamond(t)}} \, ,\\[0.1 cm]
\lambda^{\diamond'}_2(t) = -1 +{\lambda_2^\diamond(t)}\left(u^\diamond(t)\delta 
+ \alpha_1 + \mu \right) -\lambda_3^\diamond(t) u^\diamond(t)\delta 
-\lambda_5^\diamond(t)\eta \, , \\[0.1 cm]
\lambda^{\diamond'}_3(t) 
= \lambda_3^\diamond(t)\left( \varepsilon + \alpha_2 + \mu \right) 
-\lambda_4^\diamond(t)\varepsilon \, , \\[0.1 cm]
\lambda^{\diamond'}_4(t) = - \lambda_1^\diamond(t)\omega 
+ \lambda_4^\diamond(t)\left(\omega +\mu \right) \, , \\[0.1 cm]
\lambda^{\diamond'}_5(t) =  -1 + \lambda_1^\diamond(t)
\displaystyle{\frac {\beta\kappa S^\diamond(t)}{(\kappa+B^\diamond(t))^2}} 
- \lambda_2^\diamond(t)\displaystyle{\frac{\beta\kappa 
S^\diamond(t)}{(\kappa+B^\diamond(t))^2}}
+ \lambda_5^\diamond(t)d   \, ,
\end{cases}
\end{equation}
with transversality conditions
\begin{equation}
\label{eq:TC}
\lambda^\diamond_i(T) = 0,
\quad i=1, \ldots, 5 \, .
\end{equation}
Furthermore,
\begin{equation}
\label{optcontrols}
u^\diamond(t) = \min \left\{ \max \left\{0, 
\frac{\delta I^\diamond(t)\, \left( \lambda_2^\diamond(t) 
- \lambda_3^\diamond(t) \right)}{W} \right\}, 1 \right\} \, .
\end{equation}
\end{theorem}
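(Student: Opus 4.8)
The plan is to prove the statement in three stages: first, the existence of an optimal control via a standard Filippov--Cesari argument; second, the derivation of the adjoint system \eqref{adjoint_function}, the transversality conditions \eqref{eq:TC} and the feedback formula \eqref{optcontrols} by applying the Pontryagin Maximum Principle recalled above to the Hamiltonian $H$; and third, uniqueness of the optimal triple via a Gronwall estimate on the optimality system.

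For existence, I would invoke the classical theorem of Fleming and Rishel (equivalently, Cesari) cited above, checking its hypotheses: (a) the admissible set $\mathcal{X}\times\mathcal{U}$ is nonempty, because the right-hand side of \eqref{ModeloColeraControlo} is continuous and, on the positively invariant region $\Omega$ of Lemma~\ref{lema:Inv}, Lipschitz with at most linear growth in $(X,u)$ — indeed $\beta B/(\kappa+B)\le\beta$ and every other term is affine — so Carathéodory's theorem produces, for each admissible $u$, a solution on all of $[0,T]$; (b) $\mathcal{U}=\{u\in L^\infty([0,T])\colon 0\le u\le 1\}$ is closed and convex; (c) the velocity set is bounded by a linear function of the state and the control; and (d) the integrand $I+B+\tfrac{W}{2}u^2$ of \eqref{costfunction} is convex in $u$ and satisfies the coercivity bound $I+B+\tfrac{W}{2}u^2\ge\tfrac{W}{2}|u|^2$ with $W>0$. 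These are exactly the conditions guaranteeing a minimizer, which gives the existence part.

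For the characterization, I would apply the Pontryagin Maximum Principle to the already-written Hamiltonian. The adjoint equations \eqref{adjoint_function} follow from $\lambda_i^{\diamond\prime}=-\partial H/\partial x_i$ evaluated along the optimal pair; the only nonroutine derivative is $\partial/\partial B\,[\beta B/(\kappa+B)]=\beta\kappa/(\kappa+B)^2$, which generates the $S^\diamond$-dependent terms in the equation for $\lambda_5^\diamond$ and, through $\lambda_1^\diamond$ and $\lambda_2^\diamond$, in the first two adjoint equations. The transversality conditions \eqref{eq:TC} are just \eqref{eq:TC:PMP}, since the terminal state is free. For the control, since $\partial H/\partial u = Wu+\delta I^\diamond(\lambda_3^\diamond-\lambda_2^\diamond)$ and $\partial^2 H/\partial u^2=W>0$, the Hamiltonian is strictly convex in $u$, so the pointwise minimizer over $[0,1]$ in \eqref{maxcondPMP} is the projection onto $[0,1]$ of the stationary point $\delta I^\diamond(\lambda_2^\diamond-\lambda_3^\diamond)/W$, which is precisely \eqref{optcontrols}.

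For uniqueness I would work with the optimality system --- the state equations \eqref{ModeloColeraControlo}, the adjoint equations \eqref{adjoint_function}, and the feedback \eqref{optcontrols} --- viewed as a two-point boundary value problem. The states stay in the bounded set $\Omega$ of Lemma~\ref{lema:Inv}; the adjoints then solve a linear system with bounded coefficients on the finite interval $[0,T]$, hence are themselves bounded; the map $(I,\lambda_2,\lambda_3)\mapsto u^\diamond$ is Lipschitz, and $B\mapsto B/(\kappa+B)$ has bounded derivative on $\Omega$. Taking two putative optimal solutions, subtracting the systems they satisfy, multiplying by the differences, integrating on $[0,T]$ and applying Gronwall's inequality forces the differences to vanish provided $T$ is sufficiently small. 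This last step --- turning the Lipschitz estimates into a contraction --- is the delicate point of the argument, and it is precisely where the a priori bounds furnished by Lemma~\ref{lema:Inv} are indispensable: without the state being confined to $\Omega$ one cannot control the Lipschitz constants of the coupled system and hence cannot close the uniqueness estimate.
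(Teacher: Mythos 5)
Your existence argument (Fleming--Rishel/Cesari, checking convexity of the integrand in $u$, compactness of the control set, and linear growth of the dynamics on the invariant region of Lemma~\ref{lema:Inv}) and your derivation of \eqref{adjoint_function}, \eqref{eq:TC} and \eqref{optcontrols} from the Pontryagin Maximum Principle are exactly the route the paper takes, and the computations you indicate (the derivative $\beta\kappa/(\kappa+B)^2$, the stationarity condition $Wu+\delta I^\diamond(\lambda_3^\diamond-\lambda_2^\diamond)=0$ projected onto $[0,1]$) are correct.

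The one genuine gap is in the uniqueness step. The theorem asserts uniqueness of the optimal solution for the problem with \emph{fixed} final time $T$ --- and in the application $T=182$ days --- whereas your Gronwall/contraction argument on the coupled state--adjoint two-point boundary value problem, as you yourself note, only closes when $T$ is sufficiently small: the constant produced by the Lipschitz estimates grows with $T$, so for large $T$ the inequality does not force the differences to vanish. The paper fills this hole with a separate observation: because the optimal control problem is autonomous (the dynamics and the integrand of \eqref{costfunction} do not depend explicitly on $t$), uniqueness on a short interval propagates to any $T$ by a concatenation/translation argument, for which the authors cite \cite{SilvaTorresTBMBS}. Without this additional step (or some other mechanism, e.g.\ strict convexity of $J$ as a functional of $u$ alone), your proof establishes the theorem only for small final times, which is strictly weaker than what is claimed.
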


\begin{proof}
Existence of an optimal solution $\left(S^\diamond, I^\diamond, 
Q^\diamond, R^\diamond, B^\diamond\right)$ associated with an 
optimal control $u^\diamond$ comes from the convexity of the 
integrand of the cost function $J$ with respect to the control 
$u$ and the Lipschitz property of the state system 
with respect to the state variables $\left(S, I, Q, R, B\right)$ 
(see, \textrm{e.g.}, \cite{Cesari_1983,Fleming_Rishel_1975}).
System \eqref{adjoint_function} is derived from the
adjoint system \eqref{adjsystemPMP},
conditions \eqref{eq:TC} from the 
transversality conditions \eqref{eq:TC:PMP},
while the optimal control \eqref{optcontrols} comes from 
the minimization condition \eqref{maxcondPMP}
of the Pontryagin Maximum Principle \cite{Pontryagin_et_all_1962}. 
For small final time $T$, the optimal control pair 
given by \eqref{optcontrols} is unique due to the boundedness 
of the state and adjoint functions and the Lipschitz property 
of systems \eqref{ModeloColeraControlo} and \eqref{adjoint_function}.
Uniqueness extends to any $T$ due to the fact that our problem
is autonomous (see \cite{SilvaTorresTBMBS} and references cited therein).
\end{proof}

In Section~\ref{sec:num:simu} we solve numerically 
the optimal control problem \eqref{mincostfunct}.


\section{Numerical Simulations}
\label{sec:num:simu}

\indent 

We start by considering in Section~\ref{sub:sec:SIB} the cholera 
outbreak that occurred in the Department of Artibonite, Haiti, 
from $1$st November 2010 to $1$st May 2011 \cite{WHO}.
Then, in Section~\ref{sub:sec:SIQRB}, we illustrate the local stability 
of the endemic equilibrium for the complete model \eqref{ModeloColera}.
Finally, in Section~\ref{sub:sec:OC}, we solve numerically the optimal 
control problem proposed and studied in Section~\ref{sec:ocp}.
Note that in all our numerical simulations the conditions 
of Lemma~\ref{lema:Inv} are satisfied.


\subsection{SIB sub-model}
\label{sub:sec:SIB}

\indent 

To approximate the real data we choose 
$\omega=\delta=\varepsilon=\alpha_2= Q(0)=R(0)=0$, 
obtaining the sub-model of \eqref{ModeloColera} given by
\begin{align}
\label{SubModeloColera}
\begin{cases}
S'(t)=\Lambda-\displaystyle\frac{\beta B(t)}{\kappa+B(t)}S(t)-\mu S(t)\\
I'(t)=\displaystyle\frac{\beta B(t)}{\kappa+B(t)}S(t)-(\alpha_1+\mu)I(t)\\
B'(t)=\eta I(t)-dB(t).
\end{cases}
\end{align}
Note that the existing data of the cholera outbreak \cite{WHO}
does not include quarantine and, consequently,
recovered individuals.
By considering the other parameter values from Table~\ref{Tab_Parameter}, 
the sub-model \eqref{SubModeloColera} approximates well the cholera outbreak 
in the Department of Artibonite, Haiti: see Figure~\ref{fig:01}. 
In this situation, the basic reproduction number \eqref{R0} is 
$R_0=35.7306$ and the endemic equilibrium \eqref{EndemicEquilibrium} is 
$$
E^*=(S^*,I^*,B^*)=(620.2829,32.2234,976.4658).
$$
\begin{figure}[ht!]
\centering
\subfloat[Infectious with symptoms $I^*$]{%
\label{FigHaitiSemQ}\includegraphics[scale=0.45]{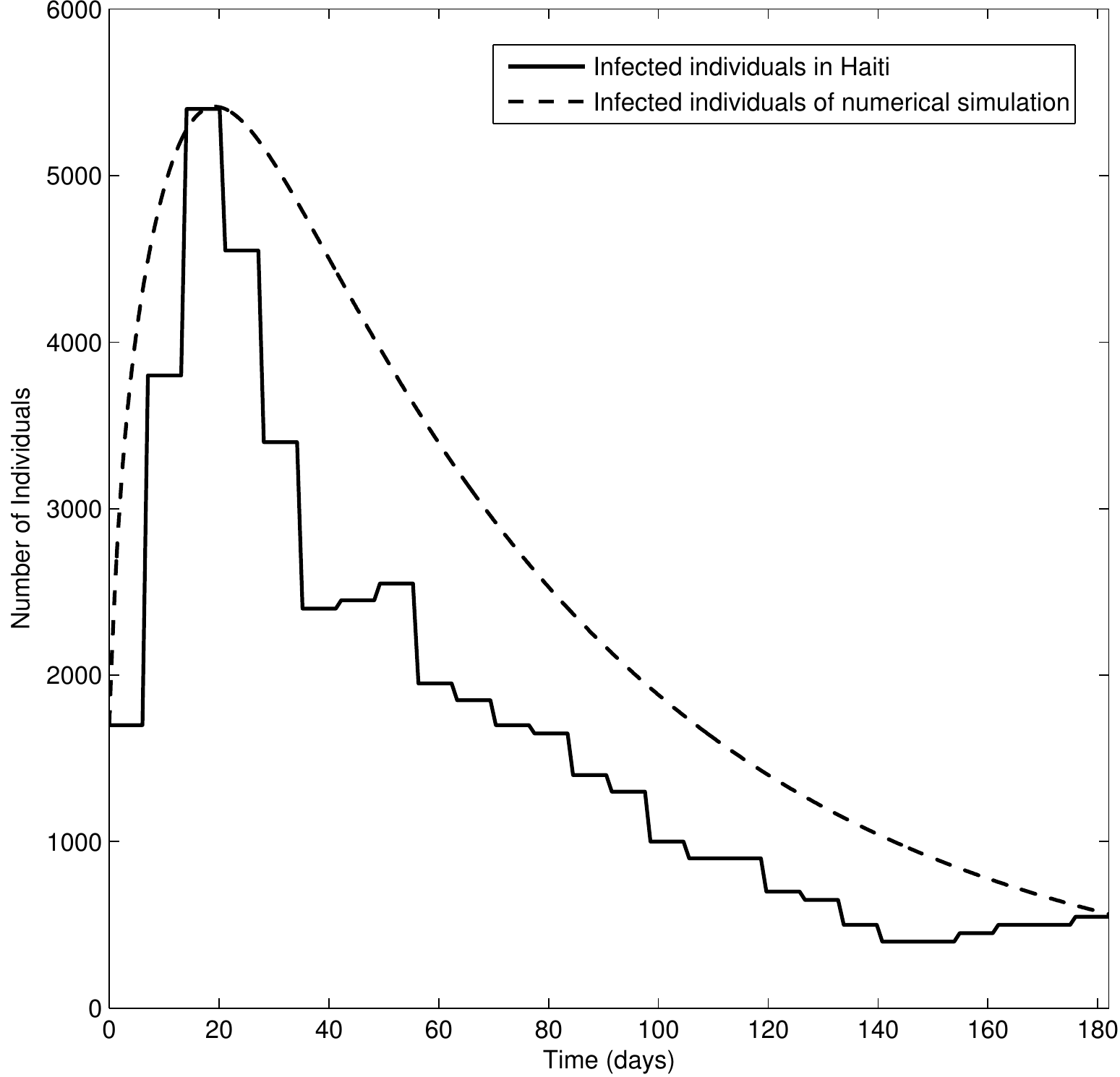}}
\qquad
\subfloat[Bacterial concentration $B^*$]{%
\label{FigHaitiSemQ:b}\includegraphics[scale=0.45]{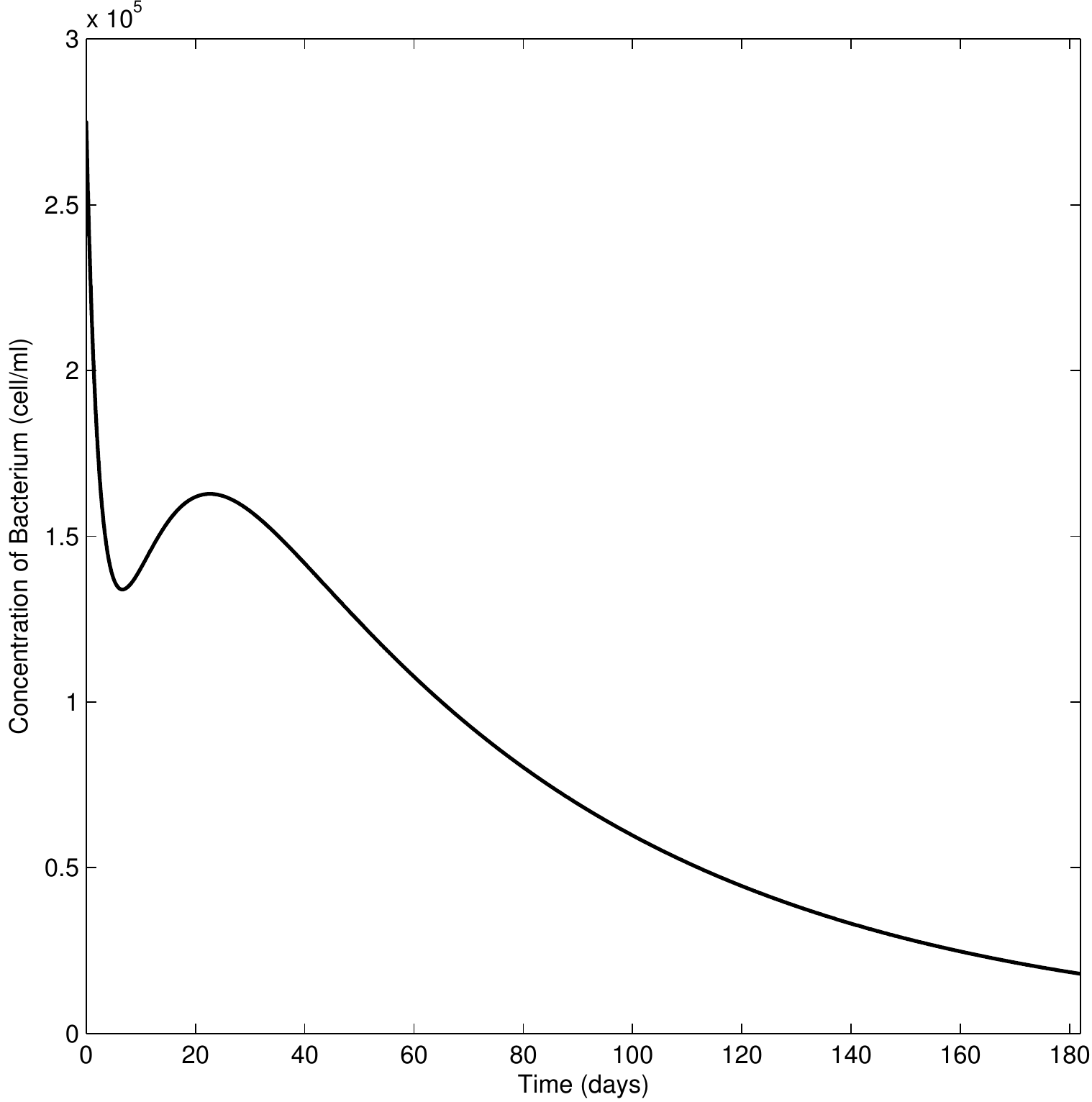}}
\caption{Solutions $I^*$ and $B^*$ predicted by our model \eqref{SubModeloColera} 
with $\omega=\delta=\varepsilon=\alpha_2=0$
and the other parameter values as given in Table~\ref{Tab_Parameter}
versus real data from the cholera outbreak 
in the Department of Artibonite, Haiti, from $1$st 
November 2010 to $1$st May 2011 (solid line in (a)).}
\label{fig:01}
\end{figure}


\subsection{Local stability of the endemic equilibrium of the SIQRB model}
\label{sub:sec:SIQRB}

\indent 

For the parameter values in Table~\ref{Tab_Parameter}, 
we have that the basic reproduction number \eqref{R0} is
$$
R_0=8.2550
$$ 
and the endemic equilibrium \eqref{EndemicEquilibrium} is 
$$
E^*=(S^*,I^*,Q^*,R^*,B^*)=(2684.3930,27.2540,6.8093,1217.7101,825.8793).
$$ 
In Figure~\ref{fig:02} we can observe agreement
between the numerical simulation of the model \eqref{ModeloColera}
and the analysis of the local stability of the 
endemic equilibrium $E^*$ done in Section~\ref{sec:3.2}.
\begin{figure}[ht!]
\centering
\subfloat[Susceptible $S^*(t)$]{%
\label{FigSuscQ}\includegraphics[scale=0.44]{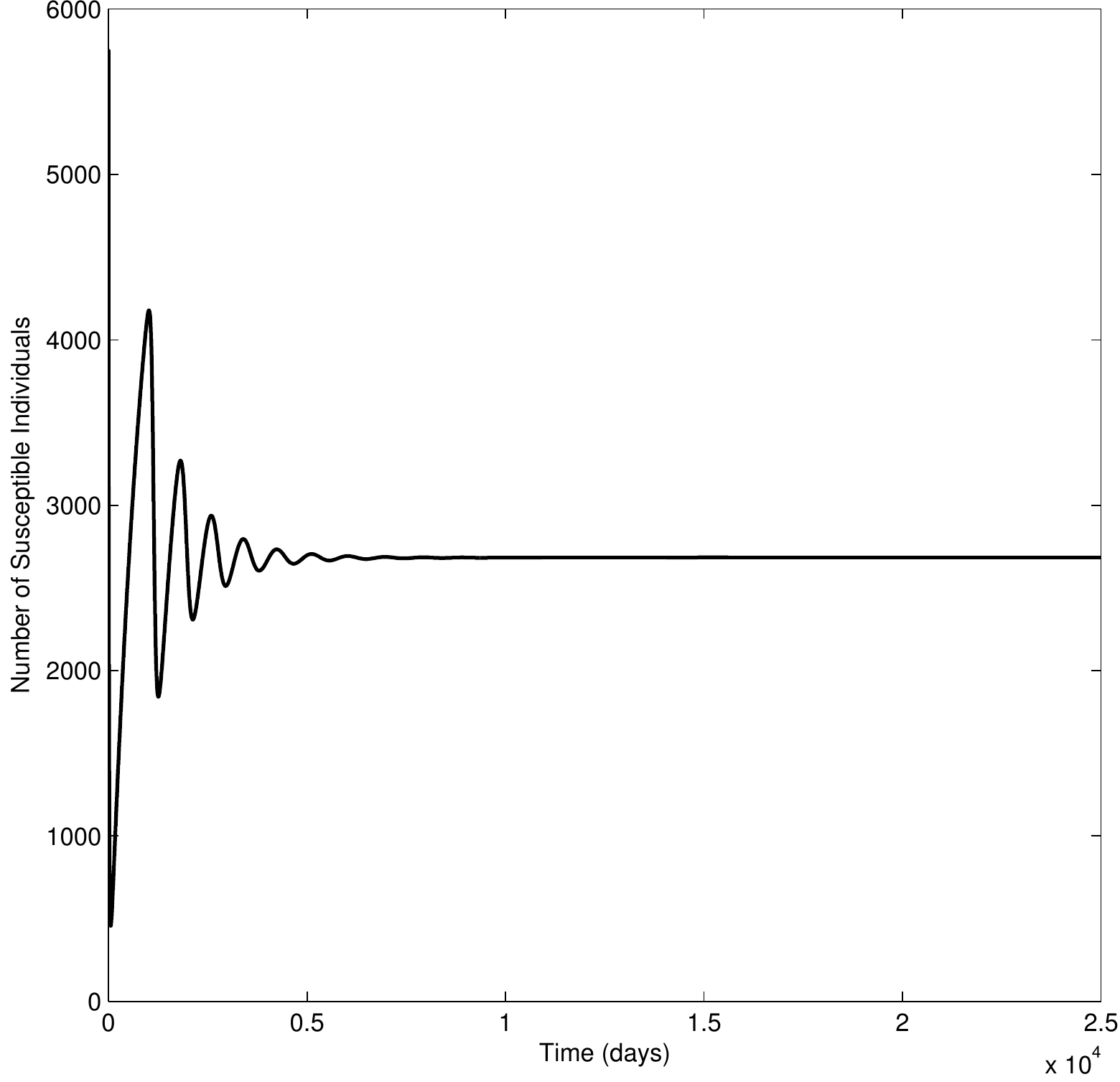}}
\qquad
\subfloat[Infectious with symptoms $I^*(t)$]{%
\label{FigInfQ}\includegraphics[scale=0.44]{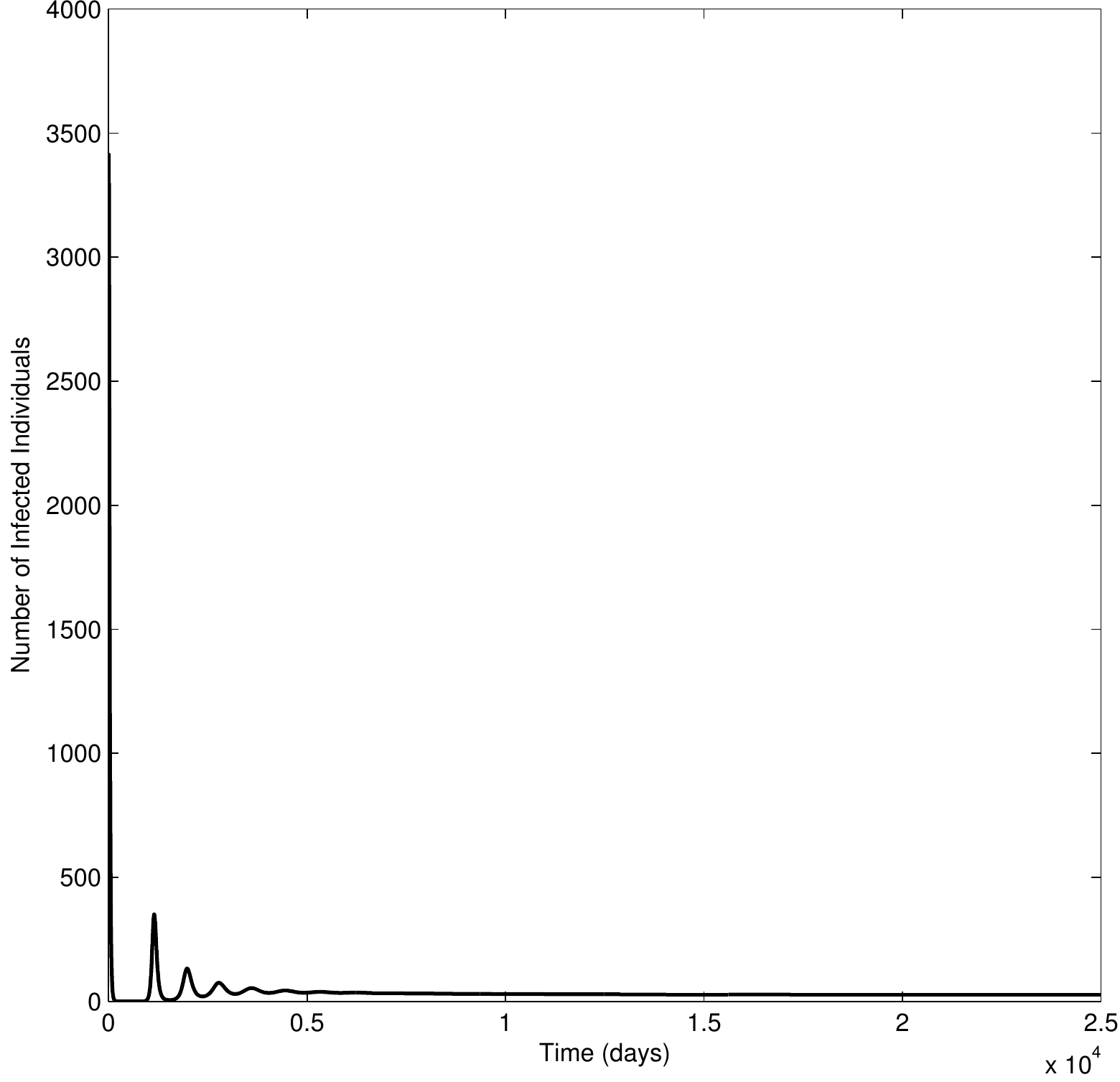}}\\
\subfloat[In treatment through quarantine $Q^*(t)$]{%
\label{FigQQ}\includegraphics[scale=0.44]{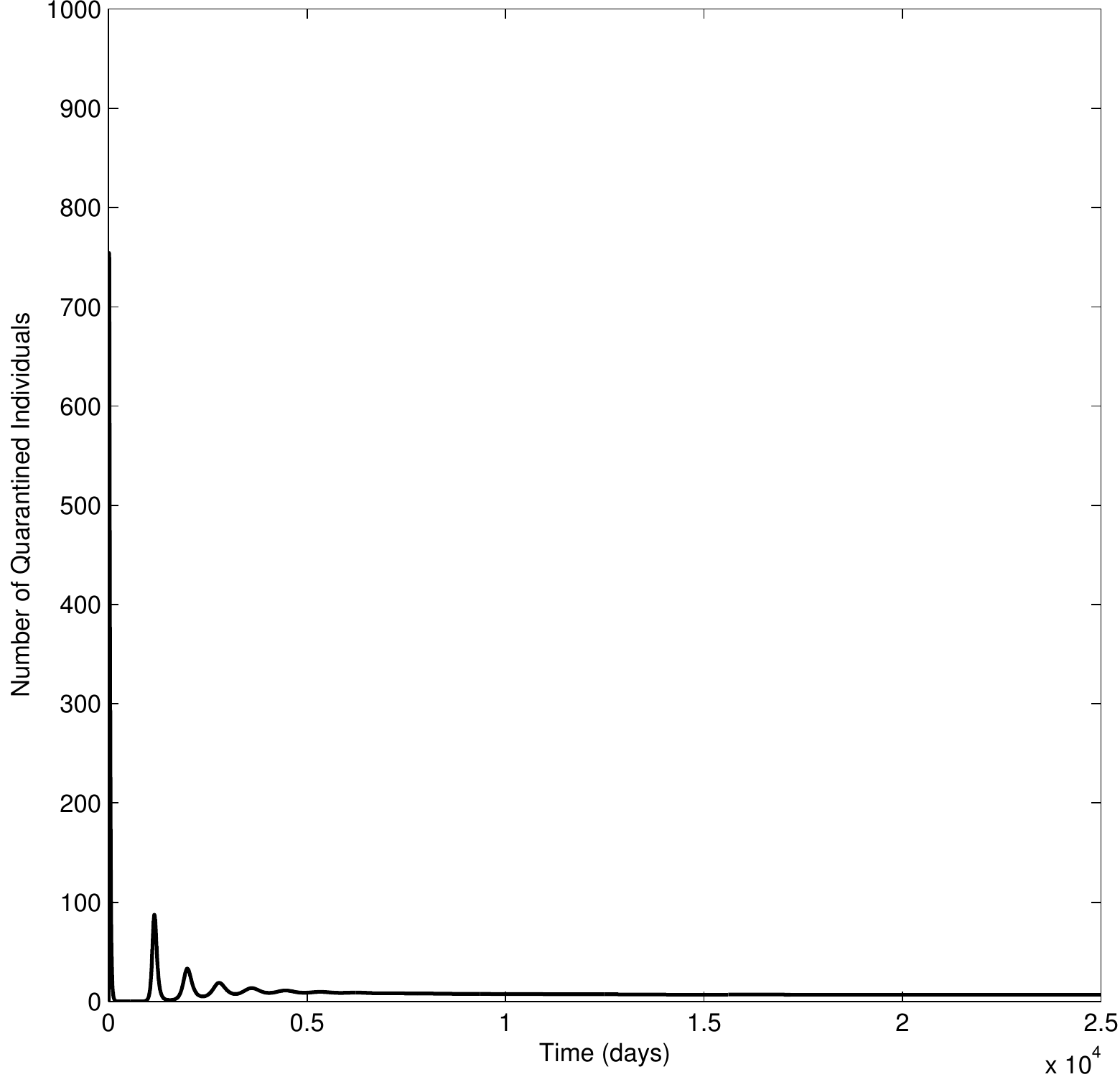}}
\qquad
\subfloat[Recovered $R^*(t)$]{%
\label{FigRQ}\includegraphics[scale=0.44]{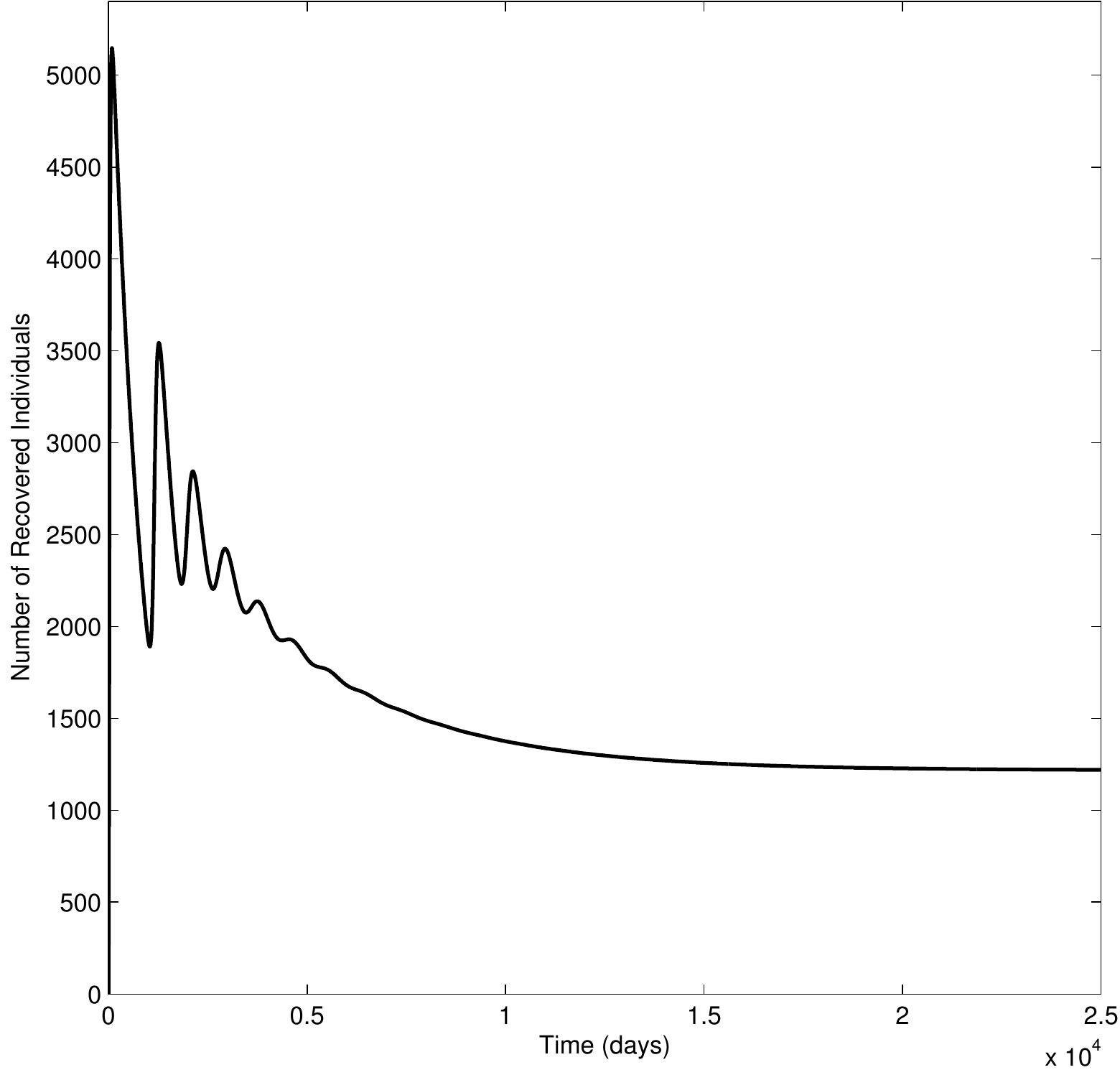}}\\
\subfloat[Bacterial concentration $B^*(t)$]{%
\label{FigBQ}\includegraphics[scale=0.44]{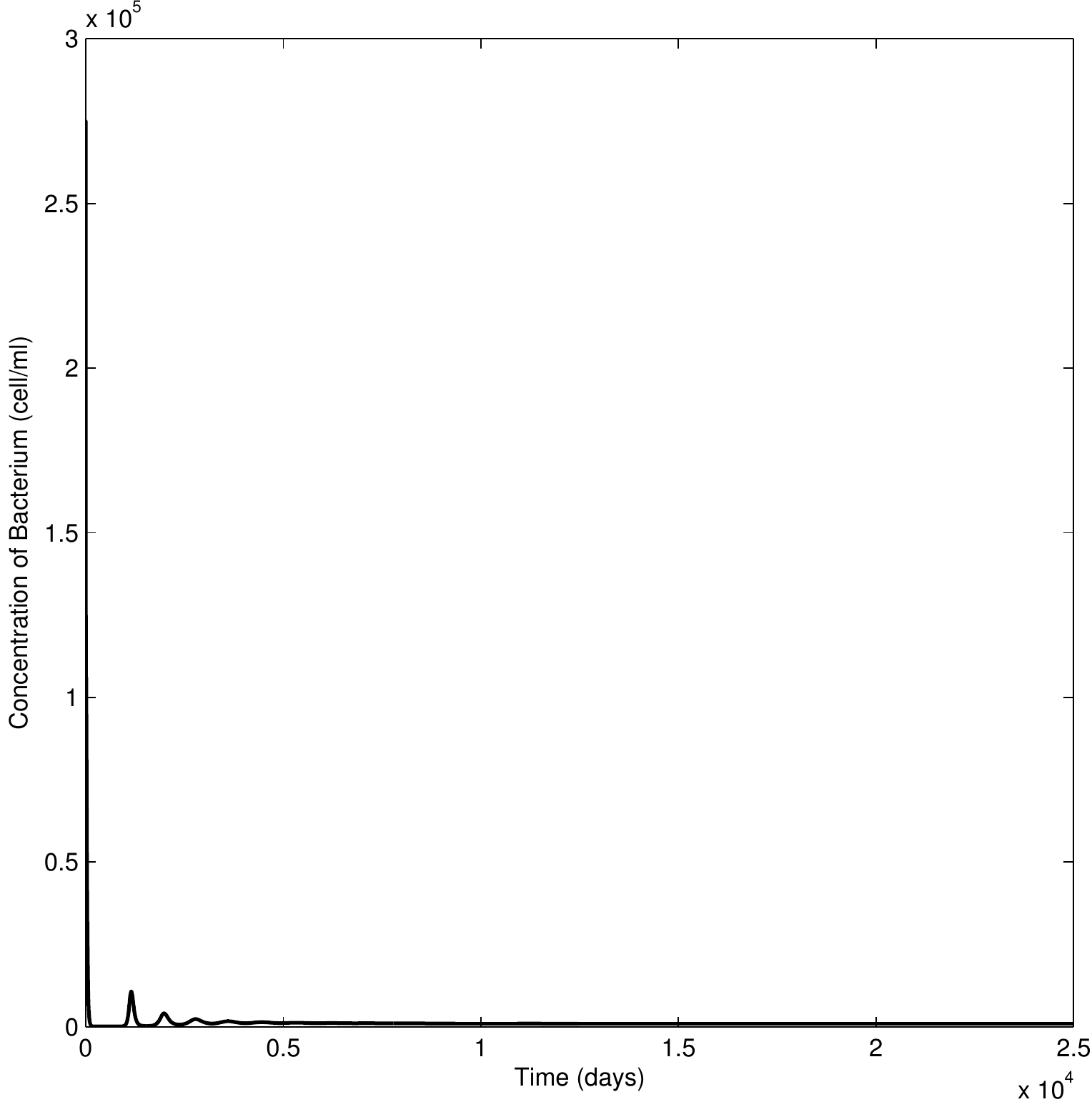}}
\caption{State trajectories of model \eqref{ModeloColera} 
for the parameter values and initial conditions of Table~\ref{Tab_Parameter}.}
\label{fig:02}
\end{figure}


\subsection{Optimal control solution}
\label{sub:sec:OC}

\indent 

We now solve the optimal control problem proposed 
in Section~\ref{sec:ocp} for $W = 2000$ \cite{fica:caro} 
and the parameter values and initial conditions 
in Table~\ref{Tab_Parameter}. 
\begin{table}[!htb]
\centering
\begin{tabular}[center]{|l|l|l|l|} \hline
\textbf{Parameter} & \textbf{Description} & \textbf{Value} & \textbf{Reference}\\ \hline \hline
\small $\Lambda$ & \small Recruitment rate & \small 24.4$N(0)$/365000 (day$^{-1}$) & \small \cite{BirthRate}\\
\small $\mu$ & \small Natural death rate & \small 2.2493$\times10^{-5}$ (day$^{-1}$) & \small \cite{DeathRate}\\
\small $\beta$ & \small Ingestion rate & \small 0.8 (day$^{-1}$)  & \small \cite{Capone}\\
\small $\kappa$ & \small Half saturation constant & \small$10^6$ (cell/ml) & \small \cite{Sanches}\\
\small $\omega$ & \small Immunity waning rate & \small 0.4/365 (day$^{-1}$) & \small \cite{Neilan}\\
\small $\delta$ & \small Quarantine rate & \small 0.05 (day$^{-1}$) & \small Assumed\\
\small $\varepsilon$ & \small Recovery rate & \small 0.2 (day$^{-1}$) & \small \cite{Mwasa}\\
\small $\alpha_1$ & \small Death rate (infected) & \small 0.015 (day$^{-1}$) & \small \cite{Mwasa}\\
\small $\alpha_2$ & \small Death rate (quarantined)& \small 0.0001 (day$^{-1}$) & \small \cite{Mwasa}\\
\small $\eta$ & \small Shedding rate (infected) & \small 10 (cell/ml day$^{-1}$ person$^{-1}$) 
& \small \cite{Capone}\\
\small $d$ & \small Bacteria death rate & \small0.33 (day$^{-1}$) & \small \cite{Capone}\\
\small $S(0)$ & \small Susceptible individuals at $t=0$ & \small 5750 (person) & \small Assumed \\
\small $I(0)$ & \small Infected individuals at $t=0$ & \small 1700 (person) & \small \cite{WHO}\\
\small $Q(0)$ & \small Quarantined individuals at $t=0$ & \small 0 (person) & \small Assumed \\
\small $R(0)$ & \small Recovered individuals at $t=0$ & \small 0 (person) & \small Assumed \\
\small $B(0)$ & \small Bacterial concentration at $t=0$ & \small$275\times10^3$ (cell/ml) 
& \small Assumed \\ \hline
\end{tabular}
\caption[]{Parameter values and initial conditions 
for the SIQRB model \eqref{ModeloColera}.}\label{Tab_Parameter}
\end{table}
The optimal control takes the maximum value for $t \in [0, 87.36]$ days. 
For $t \in\ ]87.36, 182]$, the optimal control is a decreasing function 
and at the final time we have $u^\diamond(182) \approx 0.00159$ 
(see Figure~\ref{FigControlo}). 
\begin{figure}
\centering
\includegraphics[scale=0.45]{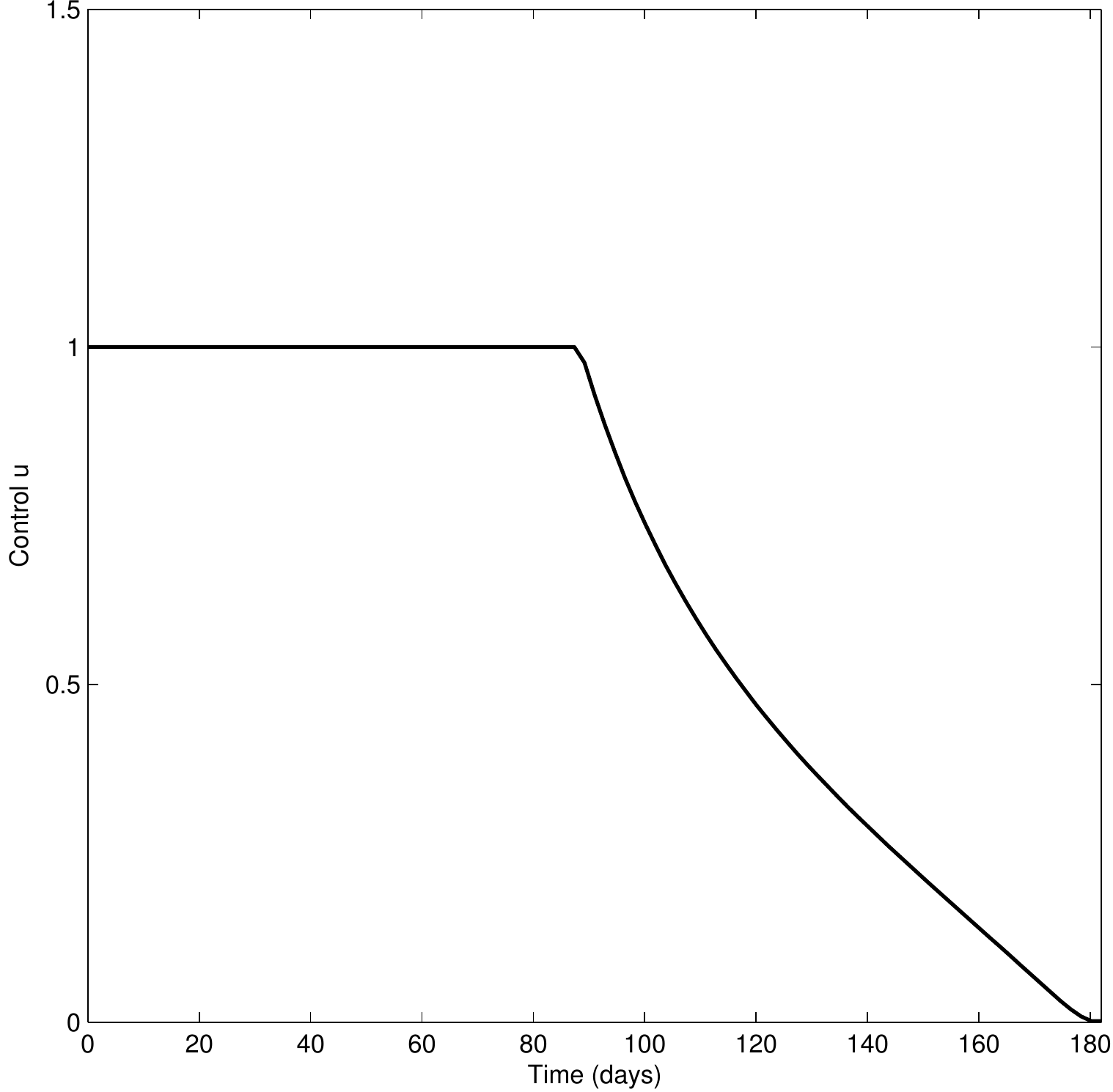}
\caption{The optimal control \eqref{optcontrols} 
for the parameter values and initial conditions 
of Table~\ref{Tab_Parameter}.}
\label{FigControlo}
\end{figure}
At the end of approximately 88 days, the number of infectious individuals 
associated with the optimal control strategy decreases from 1700 
to approximately 86 individuals and at the final time of $T = 182$ days, 
the number of infectious individuals associated with 
the optimal control is, approximately, $23$ (see Figure~\ref{FigHaitiQ:a}). 
We observe that the strategy associated with the control $u^\diamond$ 
allows an important decrease on the number of infectious individuals 
as well as on the concentration of bacteria. The maximum value 
of the number of infectious individuals also decreases significantly 
when the control strategy is applied. The optimal control implies 
a significant transfer of individuals to the recovered class. 
\begin{figure}
\centering
\subfloat[]{%
\label{FigHaitiQ:a}\includegraphics[scale=0.45]{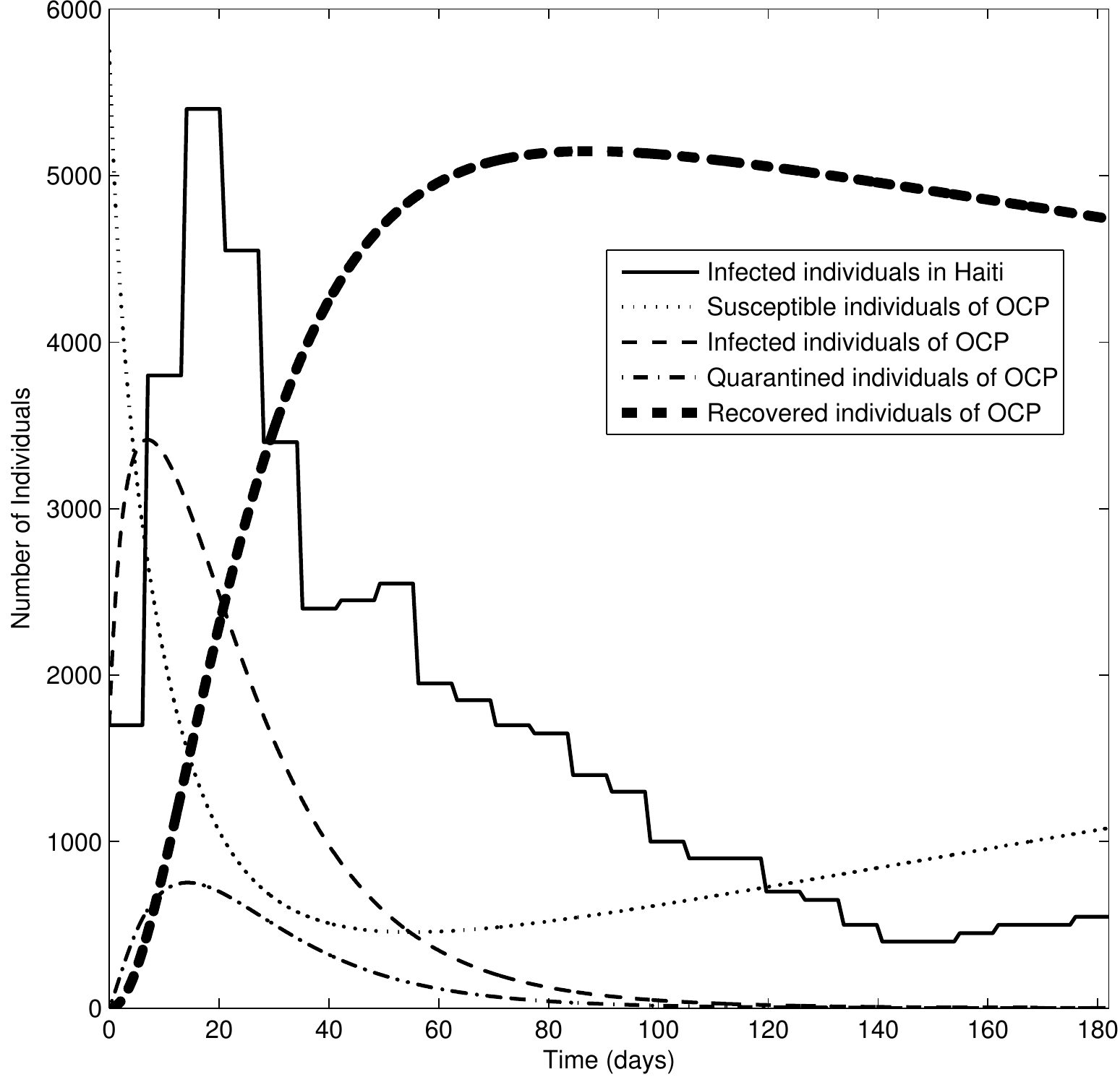}}
\qquad
\subfloat[]{%
\label{FigHaitiQ:b}\includegraphics[scale=0.45]{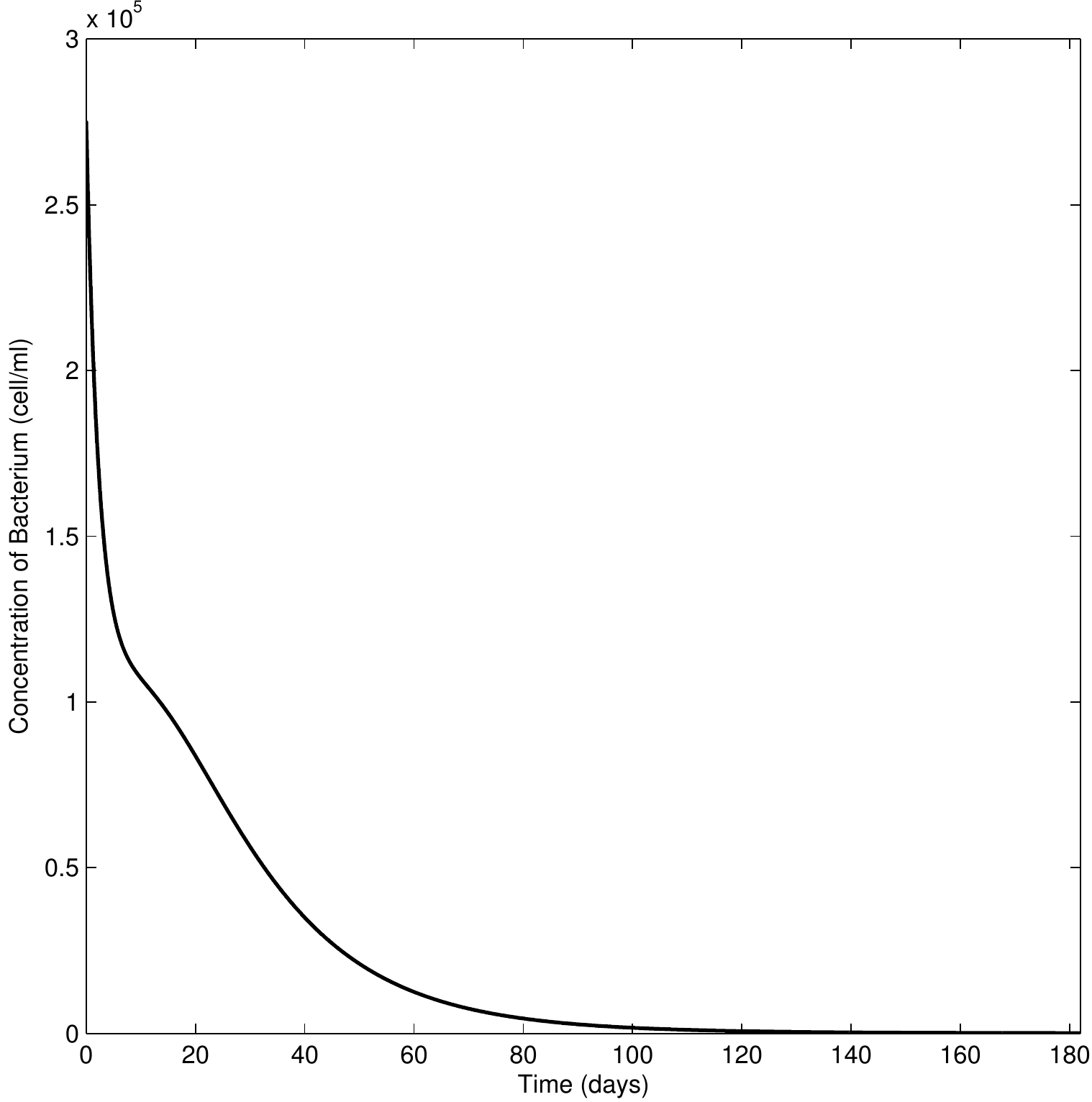}}
\caption{(a) In solid line: real data from the cholera outbreak 
in the Department of Artibonite, Haiti, from $1$st 
November 2010 to $1$st May 2011; in dashed lines 
the optimal solutions $S^\diamond$, $I^\diamond$, $Q^\diamond$ 
and $R^\diamond$ of the OCP (Optimal Control Problem) \eqref{mincostfunct}
with parameter values and initial conditions of Table~\ref{Tab_Parameter}.
(b) Optimal bacterial concentration for the optimal control problem \eqref{mincostfunct}
with parameter values and initial conditions of Table~\ref{Tab_Parameter}.}
\label{FigHaitiQ}
\end{figure}


\section{Conclusion}
\label{sec:conc}

\indent 

SIR (Susceptible--Infectious--Recovered)
type models and optimal control theory
provide powerful tools to describe and control 
infection disease dynamics \cite{MR3054565,MR3508846,Silva}.
In this paper we propose a SIQRB model for cholera transmission dynamics. 
Our model differs from the other mathematical models for cholera dynamics 
transmission in the literature, because it assumes that infectious individuals 
subject to treatment stay in quarantine during that period. Our goal is to find 
the optimal way of using quarantine with the less possible cost and, simultaneously, 
to minimize the number of infectious individuals and the bacteria concentration. 
For that we propose an optimal control problem, which is analyzed both analytically 
and numerically. The numerical simulations show that after approximately three months
($87.36$ days) the optimal strategy implies a gradual reduction of the fraction of infectious 
individuals that stay in quarantine. To be precise, by introducing the optimal 
strategy through quarantine, as a way of systematizing treatment,
one reduces the 2247 infected individuals reported by WHO in \cite{WHO}
(see Figure~\ref{FigHaitiSemQ}) to just 86 infected individuals 
(see Figure~\ref{FigHaitiQ:a}). Since quarantine implies a big economic, 
social and individual effort, it is important to know the instant of time 
from which the infectious individuals may leave quarantine 
without compromising the minimization of the number of infectious individuals 
and the bacterial concentration.

 
\section*{Acknowledgements}

\indent 

This research was supported by the
Portuguese Foundation for Science and Technology (FCT)
within projects UID/MAT/04106/2013 (CIDMA) 
and PTDC/EEI-AUT/2933/2014 (TOCCATA), funded by Project 
3599 - Promover a Produ\c{c}\~ao Cient\'{\i}fica e Desenvolvimento
Tecnol\'ogico e a Constitui\c{c}\~ao de Redes Tem\'aticas 
and FEDER funds through COMPETE 2020, Programa Operacional
Competitividade e Internacionaliza\c{c}\~ao (POCI).
Lemos-Pai\~{a}o is also supported by the Ph.D.
fellowship PD/BD/114184/2016; Silva by the post-doc 
fellowship SFRH/BPD/72061/2010. The authors are very grateful
to an anonymous Reviewer for several important
comments and questions that improved the manuscript.



\end{document}